\newtheorem*{rep@theorem}{\rep@title}
\newcommand{\newreptheorem}[2]{%
\newenvironment{rep#1}[1]{%
 \def\rep@title{#2 \ref{##1}}%
 \begin{rep@theorem}}%
 {\end{rep@theorem}}}
\newcommand{\R}{\mathbb{R}} 
\newcommand{\Rn}{\mathbb{R}^n} 
\newcommand \B[1][n]{B_2^{#1}}
\newcommand{\N}{\mathbb{N}}
\newcommand{\vol}[1][n]{\operatorname{vol}_{#1}}
\newcommand{\Vol}{\operatorname{vol}_{nm}}
\def \s{\mathbb{S}^{n-1}}
\newcommand {\conbodo}[1][nm] {\mathcal{K}^{#1}_o}
\newcommand {\conbodio}[1][nm] {\mathcal{K}^{#1}_{(o)}}
\newcommand {\sta}[1][nm] {\mathcal{S}^{#1}}
\newcommand{\PP}[1][Q]{\Pi_{#1,p}^{\circ}}
\newcommand{\G}[1][Q]{\Gamma_{#1,p}}
\newtheorem{theorem}{Theorem}[section]
\newtheorem{definition}[theorem]{Definition}
\newtheorem{lemma}[theorem]{Lemma}
\newtheorem{proposition}[theorem]{Proposition}
\newtheorem{corollary}[theorem]{Corollary}
\title[Moment-Entropy inequalities for matrices]{On Moment-Entropy inequalities \\ in the space of matrices}
\author[Langharst]{Dylan Langharst}
\address{Institut de Math\'ematiques de Jussieu (IMJ-PRG) \\ Sorbonne Universit\'e, CNRS \\
4 Place Jussieu \\ 75252 Paris \\ France}
\email{dylan.langharst@imj-prg.fr}
\thanks{MSC 2020 Classification: 52A20, 52A40;  Secondary: 94A17, 94A17.
Keywords: Blaschke-Santal\'o inequality, $L^p$ affine isoperimetric inequalities, moment-entropy inequalities, Fisher information,}
\begin{document}

\begin{abstract}
    In a series of works, Lutwak, Yang and Zhang established what could be called \textit{affine information theory}, which is the study of moment-entropy and Fisher-information-type inequalities that are invariant with respect to affine transformations for random vectors. Their set of tools stemmed from sharp affine isoperimetric inequalities in the $L^p$ Brunn-Minkowski theory of convex geometry they had established. In this work, we generalize the affine information theory to the setting of matrices. These inequalities on the space of $n\times m$ matrices are induced by the interaction between $\R^n$ with its Euclidean structure and $\R^m$ equipped with a pseudo-norm.  
\end{abstract}

\maketitle

\section{Introduction}
A function $f$ on $\R^n$ is a density function if it is nonnegative, integrable, and integrates to $1$. We say $\mathcal{X}$ is a random vector with density $f$ if $\mathcal{X}$ is a function that has a finite $L^1$ norm with respect to the probability measure that has density $f$. To emphasize the connection between $\mathcal{X}$ and $f$, we will often write $f_{\mathcal{X}}$ for $f$. For $p\in \R$, we say that the random vector $\mathcal{X}$ has a finite moment of order $p$ if
\[
\int_{\R^n}|x|^pf_{\mathcal{X}}(x)dx < \infty.
\]
For $\lambda \in (0,\infty]$, the $\lambda$-R\'enyi entropy power $N_\lambda (\mathcal{X})$ of $\mathcal{X}$ is given by
\begin{equation}
\label{eq:renyi}
    N_\lambda (\mathcal{X}) = \begin{cases}
    \left(\int_{\R^n}f(x)^\lambda dx\right)^\frac{1}{n(1-\lambda)}, & \text{if }\lambda\neq 1,
    \\
    \exp{\left(-\frac{1}{n}\int_{\R^n}f(x)\log f(x) dx\right)}, & \text{if }\lambda = 1,
    \\
    \|f\|_{L^\infty(\R^n)}^{-\frac{1}{n}}, & \text{if }\lambda = \infty.
    \end{cases}
\end{equation}
Here, by $\|f\|_{L^\infty(\R^n)}$, we mean the essential supremum of $f$. We will be concerned with the interaction of matrices and density functions. If a random vector $\mathcal{X}$ has density $f_{\mathcal{X}}$ and $A$ is a non-singular matrix, then the density of the random vector $A\mathcal{X}$ has density
\[
f_{A\mathcal{X}}(y) = |\text{det} A|^{-1}f_{\mathcal{X}}(A^{-1}y).
\]
For a matrix $A,$ $A^t$ denotes its transpose, and $A^{-t}$ denotes its inverse transpose.

Consider two parameters, $\lambda,p>0$. Define the function on $\R_+$ given by
\begin{equation}
\label{eq:the_formula}
    p_{p,\lambda}(s)= \begin{cases}\left(1+(1-\lambda)\frac{s^p}{p}\right)^{-1 /(1-\lambda)}_+, & \text { if } \lambda \neq 1, \\ e^{-\frac{s^p}{p}}, & \text { if } \lambda=1.
    \end{cases}
\end{equation}
Here, for $c\in\R,c_+=\max\{c,0\}$. We note that when $\lambda< 1$, the function $p_{p,\lambda}$ is decreasing and nonnegative on all of $\R_+$, and so the $+$ subscript is not necessary in this range.

In 2004, Lutwak, Yang and Zhang \cite[Theorem 6.2]{LYZ04_3} showed the following moment-entropy inequality. Nguyen \cite{NVH19} later gave a different approach. For $p,\lambda>0,$ we say a random vector $\mathcal{Z}$ on $\R^n$ is a \textit{standard} generalized Gaussian random vector associated with $(p,\lambda)$ if its density is given by 
\begin{equation}
\label{eq:standard}
    f_{\mathcal{Z}}(x)= {c_{p,\lambda}}p_{p,\lambda}(|x|).
\end{equation}
Here, the normalizing constant, which ensures that $ f_{\mathcal{Z}}$ is a density function, is given by
\begin{equation}
    c_{p,\lambda} = \begin{cases}
        \omega_np^\frac{n}{p}\Gamma\left(1+\frac{n}{p}\right)\left(\frac{\Gamma\left(\frac{1}{1-\lambda}-\frac{n}{p}\right)}{(1-\lambda)^\frac{n}{p}\Gamma\left(\frac{1}{1-\lambda}\right)}\right), & \text{if }\lambda<1;
        \\
        \omega_n p^{\frac{n}{p}}\Gamma\left(1+\frac{n}{p}\right), & \text{if }\lambda =1;\\
        \omega_n p^\frac{n}{p}\Gamma\left(1+\frac{n}{p}\right)\left(\frac{\Gamma\left(\frac{\lambda}{\lambda-1}\right)}{(\lambda-1)^\frac{n}{p}\Gamma\left(\frac{\lambda}{\lambda-1}+\frac{n}{p}\right)}\right), & \text{if }\lambda >1.
    \end{cases}
\end{equation}
As usual, $|\cdot|$ is the Euclidean norm. We denote by $\Gamma(\cdot)$ the usual Gamma function. If a random vector has density $f_{A\mathcal{Z}}$ for some $A\in GL_n(\R)$, then we say $\mathcal{Z}$ is a generalized Gaussian random vector. Slight variations of the definition of generalized Gaussian random vector (with different coefficients adjacent to $|x|^p$) have appeared throughout \cite{LYZ04_3,LYZ05_2,LYZ07,LLYZ12,LLYZ13}. Density functions of the form \eqref{eq:standard} are also known as Barenblatt functions. Special cases have appeared in \cite{AS85,AE96,CT91,CI67,FKN90,JV07,AA09,KT09,MT10}.

We denote the dot product of two vectors $x$ and $y$ as $x\cdot y$. For $d\geq 0$, set 
$$\omega_d=\frac{\pi^\frac{d}{2}}{\Gamma\left(1+\frac{d}{2}\right)}.$$
We denote the unit Euclidean ball in $\R^n$ by $\B$; recall that $\vol(\B)=\omega_n$. We denote by $\s$ its boundary, the unit sphere.
\vskip 3mm
\noindent {\bf Theorem A}
\textit{Let $p\geq 1$ and $\lambda > \frac{n}{n+p}$. If $\mathcal{X}$ and $\mathcal{Y}$ are independent random vectors that have finite moment of order $p$, then it holds
\[
\mathbb{E}(|\mathcal{X}\cdot \mathcal{Y} |^p) \geq (n\omega_n)^{-\frac{n+p}{n}} \frac{2\omega_{n-2+p}}{(n\omega_n)^\frac{p}{n}\omega_{p-1}}  D_{n,p,\lambda}^2 [N_\lambda (\mathcal{X})N_\lambda (\mathcal{Y})]^p.
\]
The constant $D_{n,p,\lambda}$ is given by \eqref{eq_my_sharp} below.
There is equality if and only if there exists $A\in GL(n)$ and $a>0$ such that $X$ has density a.e. $f_{A\mathcal{Z}}$ and $\mathcal{Y}$ has density a.e. $g_{aA^{-t}\mathcal{Z}}$, where $\mathcal{Z}$ is a standard generalized Gaussian random vector associated with $(p,\lambda)$.
}
\vskip 3mm
\noindent Lutwak, Yang and Zhang had proven the inequality "from scratch" using tools from convex geometry. The use of convex geometry to establish inequalities in information theory was investigated early on in e.g. \cite{EL78,CC84,DCT91}. Nguyen, also using convex geometry, proved Theorem A starting with the following inequality. We denote by $\|f\|_{L^p(E)}$ the usual $L^p$ norm over a Borel set $E$ with respect to the Lebesgue measure. We note that if one compares the representation of Theorem A in the previously mentioned works to the one here, there will be a different exponent on $N_\lambda (\mathcal{X})$; this is due to a difference in definition. We follow here the definition of $N_\lambda (\mathcal{X})$ from the later work by Lutwak, Lv, Yang and Zhang \cite{LLYZ13}.
\vskip 2mm
\noindent {\bf Theorem B}
\textit{Let $p\geq 1$. Then, for any nonnegative functions $f,g\in L^1(\s),$ it holds that
$$\int_{\s}\int_{\s}|u\cdot v|^p f(u)g(v) dudv \geq  (n\omega_n)^{-\frac{n+p}{n}} \frac{2\omega_{n-2+p}}{(n\omega_n)^\frac{p}{n}\omega_{p-1}}  \|f\|_{L^{\frac{n}{n+p}}(\s)}\|g\|_{L^{\frac{n}{n+p}}(\s)}.$$
Equality holds if and only if there exists $A\in GL_n(\R)$ and constants $c_1,c_2>0$ such that, for a.e. $u\in\s$,
$$f(u) = c_1|A^{-1}u|^{-n-p} \quad \text{and} \quad g(u)=c_2 |A^{t}u|^{-n-p}.$$
}

Theorem B had been established by Lutwak and Zhang \cite[Theorem A]{LZ97} for continuous functions; Ngyugen then established it for all integrable functions by reducing to the continuous case. In this note, we establish a version of Theorem A in the setting of random vectors on the space of matrices. We denote by $M_{n,m}(\R)$ the space of $n\times m$ matrices. Like in the aforementioned works, we will be using concepts from convex geometry. Recall that a set $K\subset \R^d$ is said to be a convex body if, for every $x,y\in \R^d$, one has $(1-\lambda)x + \lambda y\in K$ for every $\lambda \in [0,1]$. In this work, we will also require our convex bodies to contain the origin. Then, we denote by $\conbodo[d]$ the set of convex bodies in $\R^d$. The support function of a convex body $K$ is given by $h_K(u)=\sup_{y\in K}\langle y,u \rangle$. We denote by $\vol[d](\cdot)$ the Lebesgue measure on $\R^d$; the Lebesgue measure on $M_{n,m}(\R)$ is then inherited by the identification of $M_{n,m}(\R)$ with $(\R^n)^m= \R^{nm}$. Elements in $M_{n,m}(\R)$ will be denoted with capital letters. Random vectors on $M_{n,m}(\R)$ will be denoted by $\mathfrak{X}$.

Let $\mathfrak{X}$ and $\mathcal{Y}$ be random vectors on $M_{n,m}(\R)$ and $\R^n$ that have density functions $f$ and $g$ respectively.  Then, for $Q\in \conbodo[m]$, we define the following expectation:
\begin{equation}
    \mathbb{E}[h_Q(\mathfrak{X}^t\mathcal{Y})^p]= \int_{\R^n}\int_{M_{n,m}(\R)}h_Q(A^tv)^pf_{\mathfrak{X}}(A)g_{\mathcal{Y}}(v)dAdv.
    \label{eq:expectation}
\end{equation}
If $Q$ is origin-symmetric (i.e. $Q=-Q$), then $h_Q$ is a norm. Other-wise, $h_Q$ is a pseudo-norm. The unit ball of this pseudo-norm is, by-definition, $Q^\circ$, the polar of $Q$. 
The idea behind the quantity \eqref{eq:expectation} is that the interaction between the random vector $\mathfrak{X}$ on $M_{n,m}(\R)$ and the random vector $\mathcal{Y}$ on $\R^n$ is occurring in the (pseudo)-Banach space $\R^m$ equipped with the pseudo-norm $h_Q$. In the space $M_{n,m}(\R)$, our analogue of $\B$ will not be $B_2^{nm}$, the unit ball with respect to the Frobenius norm. Instead, it will be a convex body $\PP \B\subset M_{n,m}(\R)$ which is generated from $\B$ and $Q$; we will discuss its origin in more detail below. Its definition is as follows. Given $K\in\conbodo[n],$ its gauge, which uniquely determines it, is precisely $\|x\|_K=\inf\{t>0:x\in tK\}$. Observe that $h_K = \|\cdot\|_{K^\circ}$. Then, $\PP \B$ is given by the gauge, for any matrix $U\in M_{n,m}(\R)$,
\[
\|U\|_{\PP \B} = \left(\int_{\s}h_Q(U^t\xi)^pd\xi\right)^\frac{1}{p}.
\]

For a fixed convex body $Q\in\conbodo[m]$, we say a random vector $\mathfrak{Z}$ on $M_{n,m}(\R)$ is a \textit{standard} generalized Gaussian random vector associated with $(Q,p,\lambda)$ if its density is given by
\begin{equation}
f_{\mathfrak{Z}}(U) =
        \alpha_{Q,p,\lambda}p_{p,\lambda}(\|U\|_{\PP \B}),
    \end{equation}
    where $\alpha_{Q,p,\lambda}>0$ is a normalizing constant. If a random vector has density $f_{A\mathfrak{Z}}$ for some $A\in GL_n(\R)$, then we say it is a generalized Gaussian random vector associated with $(Q,p,\lambda)$.

\begin{theorem}
\label{t:main}
    Fix $m,n\in\N$ and $Q\in\conbodo[m]$. Let $p\geq 1$ and $\lambda \geq \frac{nm}{nm+p}$. Suppose $\mathfrak{X}$ and $\mathcal{Y}$ are independent random vectors on $M_{n,m}(\R)$ and $\R^n$ respectively, each of which has a finite moment of order $p$. Suppose that either $\mathfrak{X}$ has even density or $Q$ is origin-symmetric. Then, 
    \begin{align*}\mathbb{E}[h_Q(\mathfrak{X}^t\mathcal{Y})^p\!] \geq \!D_{n,p,\lambda}D_{nm,p,\lambda}\! {(n\omega_n)^{-\frac{n+p}{n}}\left(nm\Vol(\!\PP \B\! )\right)^{-\frac{p}{nm}}}[N_\lambda(\mathfrak{X})N_\lambda (\mathcal{Y})]^p.\end{align*}
    Here, $D_{n,p,\lambda}$ is a sharp constant given by \eqref{eq_my_sharp} below.
    There is equality if and only if there exists $A\in GL_n(\R)$ and $\alpha>0$ such that $\mathcal{Y}$ has density $g_{\alpha A^{-t}\mathcal{Z}}$ where $\mathcal{Z}$ is a standard generalized Gaussian random vector associated with $(p,\lambda)$ and $\mathfrak{X}$ has density $g_{A\mathfrak{Z}}$ where $\mathfrak{Z}$ is a standard generalized Gaussian random vector associated with $(Q,p,\lambda)$.
\end{theorem}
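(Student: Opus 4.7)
The plan is to adapt Nguyen's route to Theorem A: decompose in polar coordinates, reduce the angular part to a matrix analog of Theorem B, and apply a sharp 1D moment-entropy inequality to the surviving radial profiles.

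First, set up polar coordinates. Write $v = r\xi$ with $r \geq 0$, $\xi \in \s$ for $\mathcal{Y}$, and $A = sT$ with $s = \|A\|_{\PP\B}$, $T \in \partial(\PP\B)$ for $\mathfrak{X}$; the latter has Jacobian $nm\Vol(\PP\B)\, s^{nm-1}\, d\mu(T)\, ds$, where $\mu$ is the cone probability measure on $\partial(\PP\B)$. Using $h_Q(A^tv)^p = (sr)^p h_Q(T^t\xi)^p$,
\[
\mathbb{E}[h_Q(\mathfrak{X}^t\mathcal{Y})^p] = nm\Vol(\PP\B)\int_0^\infty\!\!\int_0^\infty\!\! s^{nm+p-1}\,r^{n+p-1}\,\Phi(s,r)\,dr\,ds,
\]
with $\Phi(s,r) = \iint h_Q(T^t\xi)^p f(sT)g(r\xi)\,d\xi\,d\mu(T)$. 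A key angular identity is $\int_{\partial(\PP\B)} h_Q(T^t\xi)^p\,d\mu(T) = 1/(n\omega_n)$ for every $\xi \in \s$: the $O(n)$-invariance of $\PP\B$ (hence of $\mu$) makes the integral constant in $\xi$, and Fubini with $\int_\s h_Q(T^t\xi)^p\,d\xi = \|T\|_{\PP\B}^p = 1$ on $\partial(\PP\B)$ pins down the constant to $1/(n\omega_n)$.

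Using this identity, the heart of the proof is to establish the matrix analog of Theorem B,
\[
\iint h_Q(T^t\xi)^p \phi(T)\psi(\xi)\,d\xi\,d\mu(T) \geq (n\omega_n)^{-\frac{n+p}{n}} \|\phi\|_{L^{\frac{nm}{nm+p}}(\mu)} \|\psi\|_{L^{\frac{n}{n+p}}(\s)},
\]
for nonnegative $\phi$ on $\partial(\PP\B)$ and $\psi$ on $\s$; setting $\phi \equiv \psi \equiv 1$ and using the angular identity verifies that the constant is sharp. Its proof should combine Theorem B applied on spherical fibers with an $O(n)$-averaging argument, and this is exactly where the symmetry hypothesis (either $\mathfrak{X}$ has even density or $Q = -Q$) enters, to control the pseudo-norm $h_Q$ during the reduction. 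Applying this matrix Theorem B pointwise with $\phi = f(s\,\cdot), \psi = g(r\,\cdot)$ separates $\Phi(s,r)$ into a product of a function of $s$ and a function of $r$, and one then invokes the sharp 1D moment-entropy inequality --- the radial specialization of Theorem A --- twice: in dimension $nm$ for the $s$-integral, producing $D_{nm,p,\lambda}\,(nm\Vol(\PP\B))^{-p/(nm)}\,N_\lambda(\mathfrak{X})^p$, and in dimension $n$ for the $r$-integral, producing $D_{n,p,\lambda}\,N_\lambda(\mathcal{Y})^p$. The hypothesis $\lambda \geq nm/(nm+p)$ is exactly what is required for finiteness in the higher-dimensional 1D step.

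For equality, trace back through the argument: the matrix Theorem B forces $f$ to be constant on level sets of $\|\cdot\|_{\PP\B}$ and $g$ on level sets of $|\cdot|$; the 1D moment-entropy step forces both radial profiles to be Barenblatt $p_{p,\lambda}$; and the $GL_n(\R) \times (0,\infty)$-equivariance $\mathfrak{X} \mapsto A\mathfrak{X}$, $\mathcal{Y} \mapsto \alpha A^{-t}\mathcal{Y}$ of both sides (easy, since the Jacobians on each side cancel) then produces the full equality case stated. The main technical hurdle is the matrix Theorem B itself: the kernel $h_Q(T^t\xi)^p$ on $\partial(\PP\B) \times \s$ is more delicate than the dot-product kernel of the classical Theorem B, so one cannot simply quote it. Navigating this reduction while keeping the sharp constant intact --- and tracking equality when $Q$ is not symmetric --- is where most of the work lies.
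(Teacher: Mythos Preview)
Your proposal has a genuine gap in the order of operations. You apply the angular inequality (your ``matrix Theorem~B'') pointwise in $(s,r)$ and then face the radial integral
\[
I_g=\int_0^\infty r^{n+p-1}\,\|g(r\,\cdot)\|_{L^{n/(n+p)}(\s)}\,dr,
\]
which you claim is handled by a ``sharp 1D moment--entropy inequality, the radial specialization of Theorem~A.'' But $I_g$ is not the $p$th moment of any probability density, and no 1D statement directly bounds it below by $D_{n,p,\lambda}N_\lambda(\mathcal{Y})^p$. The paper avoids this by reversing the order: it integrates radially \emph{first}, producing the Ball bodies $K_f(nm+p)$ and $K_g(n+p)$ via $\rho_{K_g(n+p)}(u)^{n+p}=\int_0^\infty g(ru)r^{n+p-1}dr$, and only then applies the spherical inequality (Theorem~\ref{t:BS_on_sphere}, which lives on $\mathbb{S}^{nm-1}\times\s$, not on $\partial(\PP\B)\times\s$). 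The resulting $L^{n/(n+p)}$-norm on the sphere is then exactly $(n\vol_n(K_g(n+p)))^{(n+p)/n}$, and Proposition~\ref{p:ball_random_relate}---proved via the Carlson--Levin inequality followed by reverse H\"older/Jensen over the sphere---converts this to $D_{n,p,\lambda}N_\lambda(\mathcal{Y})^p$. By reverse Minkowski, your $I_g$ is \emph{smaller} than the paper's Ball-body quantity, so the bound you need is strictly stronger than what Proposition~\ref{p:ball_random_relate} supplies; it may well be true, but it is not a ``radial specialization'' of anything already known and would require its own proof.

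A second, independent issue: your route to the matrix Theorem~B via ``Theorem~B on spherical fibers plus $O(n)$-averaging'' is not how the paper proceeds, and it is unclear this produces the sharp constant. The paper derives Theorem~\ref{t:BS_on_sphere} from the $m$th-order $L^p$ Santal\'o inequality (Lemma~\ref{l:hipBS}) combined with the dual Minkowski first inequality~\eqref{dual_Min_first}; this convex-geometric input is what drives both the inequality and its equality characterization. Your equality analysis also glosses over a real difficulty: applying the angular inequality at each $(s,r)$ produces an extremizing matrix $A=A(s,r)$ that a~priori depends on $(s,r)$, and you would need an additional argument to force it constant.
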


We follow the route suggested by Ngyugen, by first establishing the appropriate analogue of Theorem B. 
\begin{theorem}
\label{t:BS_on_sphere}
    Fix $n,m\in\N,p\geq 1$ and $Q\in\conbodo[m]$. Let $f\in L^1(\mathbb{S}^{nm-1})$ and $g\in L^1(\s)$. If either $f$ is even or $Q$ is symmetric, then one has
    \begin{align*}&\int_{\s}\int_{\mathbb{S}^{nm-1}}h_Q(U^tv)^p f(U)g(v)dUdv 
    \\
    &\geq \|f\|_{L^\frac{nm}{nm+p}(\mathbb{S}^{nm-1})}\|g\|_{L^\frac{n}{n+p}\left(\s\right)} (n\omega_n)^{-\frac{n+p}{n}}\left(nm\Vol(\PP \B )\right)^{-\frac{p}{nm}}.
    \end{align*}
There is equality if and only if there exists a matrix $A\in GL_n(\R)$ and nonnegative constants $c_1,c_2 >0$ such that $g(u)=c_1|A^{t}u|^{-(n+p)}$ and $f(U) =c_2\|A^{-1}U\|_{\PP \B}^{-(nm+p)}$ almost everywhere.
\end{theorem}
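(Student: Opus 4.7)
The plan is to follow the route of Nguyen \cite{NVH19} used for Theorem B, adapted to the matrix-valued setting; a standard approximation argument first reduces the problem to continuous, strictly positive $f$ and $g$. To each density $g$ on $\s$, associate the convex body $\P(g\,d\sigma) \subset M_{n,m}(\R)$ defined by
\[
h_{\P(g\,d\sigma)}(U)^p := \int_{\s} h_Q(U^tv)^p\, g(v)\, dv, \qquad U \in M_{n,m}(\R).
\]
This is a genuine support function: it is $1$-homogeneous in $U$ by construction, and Minkowski's inequality (for $p\geq 1$) combined with the sublinearity of $h_Q$ shows it is subadditive in $U$. With this notation the left-hand side of Theorem \ref{t:BS_on_sphere} becomes $\int_{\mathbb{S}^{nm-1}} f(U)\, h_{\P(g\,d\sigma)}(U)^p\, dU$.

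Next I would apply the reverse H\"older inequality in $U$ with exponent $q_{nm} := nm/(nm+p) \in (0,1)$ and its (negative) conjugate $-nm/p$, combined with the polar integration formula $\int_{\mathbb{S}^{nm-1}} h_K(U)^{-nm}\,dU = nm\,\Vol(K^\circ)$, to obtain
\[
\int_{\mathbb{S}^{nm-1}} f(U)\, h_{\P(g\,d\sigma)}(U)^p\, dU \;\geq\; \|f\|_{L^{q_{nm}}(\mathbb{S}^{nm-1})} \bigl(nm\,\Vol(\PP(g\,d\sigma))\bigr)^{-p/nm},
\]
with equality iff $f$ is a positive multiple of $h_{\P(g\,d\sigma)}^{-(nm+p)}$. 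The hypothesis that $f$ be even or $Q$ be symmetric is exactly what ensures this predicted extremal $f$ is compatible with the parity structure of the problem (so the equality case is nontrivial).

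The remaining and main step is the sharp affine isoperimetric inequality
\[
\Vol\bigl(\PP(g\,d\sigma)\bigr) \cdot \|g\|_{L^{q_n}(\s)}^{nm/p} \;\leq\; \Vol(\PP\B)\cdot (n\omega_n)^{m(n+p)/p}, \qquad q_n := \tfrac{n}{n+p},
\]
with equality precisely at $g(v) \propto |A^tv|^{-(n+p)}$, $A \in GL_n(\R)$. A direct substitution computation using the $-(n+p)$-homogeneous extension of $g$ together with the pushforward $v \mapsto Av/|Av|$ shows that both sides of this inequality transform identically under the induced $GL_n$-action $g \mapsto g_A$; the inequality is therefore affine invariant, and one may reduce $g$ to a canonical (e.g.\ isotropic) position and compare to $g\equiv 1$, which manifestly achieves equality and which, upon undoing the reduction, gives the claimed extremal $g$. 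Tracing the equality conditions through both displays yields the joint characterization $g(u) = c_1 |A^tu|^{-(n+p)}$ and $f(U) = c_2\|A^{-1}U\|_{\PP\B}^{-(nm+p)}$.

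The main obstacle is precisely this sharp affine isoperimetric inequality -- it is the genuinely new geometric content of Theorem \ref{t:BS_on_sphere}. I would attempt to derive it by applying Theorem B in $\R^n$ to $g$ against a test density on $\s$ that depends parametrically on $U \in \mathbb{S}^{nm-1}$, then swapping the order of integration in $U$ via Fubini so that the outer integral assembles $\Vol(\PP\B)$ as the normalization constant. The delicate point will be choosing the $U$-dependent test function so that the two sharp constants produced by Theorem B (the one in $v$ and the one ``integrated up'' in $U$) combine to give exactly $(n\omega_n)^{m(n+p)/p}\Vol(\PP\B)$, with the extremizer in Theorem B synchronising across $U$ to force $g(v)\propto |A^tv|^{-(n+p)}$.
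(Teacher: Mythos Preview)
Your factorisation in Steps~1--2 is valid and is a legitimate alternative to the paper's route: the paper applies the dual Minkowski first inequality in the $v$-variable and then the $m$th-order $L^p$ Santal\'o inequality (Lemma~\ref{l:hipBS}) to the star body $L\subset M_{n,m}(\R)$ attached to $f$, whereas you apply reverse H\"older in the $U$-variable and are then left with an isoperimetric inequality for the body $\PP(g\,d\sigma)\subset M_{n,m}(\R)$ attached to $g$. These two splittings are in some sense dual to one another, and the computations in Step~2 are correct.

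The genuine gap is Step~3. Your proposed inequality
\[
\Vol\bigl(\PP(g\,d\sigma)\bigr)\,\|g\|_{L^{n/(n+p)}(\s)}^{nm/p}\ \le\ \Vol(\PP\B)\,(n\omega_n)^{m(n+p)/p}
\]
is \emph{exactly} the hard geometric input of the theorem: writing $\rho_K=g^{1/(n+p)}$, it is an $(L^p,Q)$ Busemann--Petty/Petty-projection-type inequality for star bodies $K\in\sta[n]_{n+p}$, and it is equivalent in content to the $m$th-order $L^p$ Santal\'o inequality (Lemma~\ref{l:hipBS}) that the paper imports from \cite{HLPRY23_2}. Your plan to derive it ``by applying Theorem~B to $g$ against a $U$-dependent test density and then Fubini'' cannot work as stated: Theorem~B only controls integrals of $|u\cdot v|^p$ with both variables in $\s\subset\R^n$, while $h_Q(U^tv)^p$ is not of this rank-one form, and the outer exponent $-nm/p$ in $\Vol(\PP(g\,d\sigma))=\frac{1}{nm}\int_{\mathbb{S}^{nm-1}}(\int_{\s}h_Q(U^tv)^pg(v)\,dv)^{-nm/p}dU$ blocks any direct use of Fubini. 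There is no way to synchronize the extremizers of Theorem~B across $U$ to produce the matrix body $\PP\B$ as normalizing constant; the $m$th-order structure is genuinely new and is not a consequence of the $m=1$ case. In short, you have correctly located where the real content sits, but to close Step~3 you must invoke (or reprove) Lemma~\ref{l:hipBS} or Theorem~D rather than bootstrap from Theorem~B.
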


In Section~\ref{sec:preliminaries}, we recall some recent concepts from convex geometry. In Section~\ref{sec:proofs}, we prove our main results. We will also extend some other moment-entropy inequalities, but we save the discussion of these topics for Section~\ref{sec:other_results}. For the reader interested in other works that involve entropy inequalities using convex geometry, we recommend the survey \cite{MMX17} and e.g. the more recent works \cite{ENT18,MM19,FLM20,EG24,Met21,MNT21}. In particular, the works \cite{HLXY22} and \cite{HJM20} are relevant. The use of Gaussian probability in convex geometry, as the object of interest, can be found in e.g. \cite{EHR1,EHR1.5,EHR2,KS93,Ball93,Lat96,CEFM04,BS07,Zv08,LMNZ17,GZ10,KL21,EM21,HXZ21,JL22,FHX23,FLX23,FLMZ23_1,FLMZ23_2}. The generalized Gaussian density was studied in \cite{LS24,LXS24}.

{\bf Acknowledgments:} We would like to thank Autt Manui for the valuable suggestions given on a draft of the present manuscript.

{\bf Funding:} The author was funded by a post-doctoral fellowship provided by the Fondation Sciences Math\'ematiques de Paris.

\section{Preliminaries}
\label{sec:preliminaries}
Let $K\in\conbodo[n]$. Then, one has the following set of inequalities: 
\begin{equation}
\label{eq:RS}
2^n \leq \frac{\vol(DK)}{\vol(K)} \leq \binom{2n}{n},\end{equation}
where $DK=\{x\in\R^n: K\cap(K+x)\neq \emptyset\}$ is the difference body of $K$, the lower-bound follows from the Brunn-Minkowski inequality, with equality if and only if $K$ is symmetric, and the upper-bound is the Rogers-Shephard inequality, with equality if and only if $K$ is an $n$-dimensional simplex \cite{RS57}. We note that, alternatively, one can write $DK=K+(-K)$, where $A+B=\{a+b:a\in A,b\in B\}$ is the Minkowski sum of Borel subsets $A$ and $B$.

For $m\geq 1$, Schneider introduced \cite{Sch70} an $m$th-order analogue of this inequality. Firstly, he defined the $m$th-order difference body $D^m(K)\subset \R^{nm}$ as 
$$D^m(K) = \left\{(x_1,\dots,x_m)\in(\R^n)^m: K\bigcap_{i=1}^m (K+x_i)\neq \emptyset\right\}.$$
Then, he showed the following generalization of the Rogers-Shephard inequality:
$$\vol[nm](D^m(K))\vol[n](K)^{-m} \leq \binom{nm+n}{n},$$
again with equality if and only if $K$ is an $n$-dimensional simplex. Concerning a lower-bound, if $m=1$ and $n\in\N$, or $n=2$ and $m\in\N$, then the lower-bound is obtained by all origin symmetric convex bodies. If $n\geq 3$ and $m\geq 2$, this is not the case; Schneider's conjecture is precisely that the lower-bound is obtained by ellipsoids for $n$ and $m$ in this range. The importance of Schneider's conjecture, in addition to its difficulty, was shown in \cite{Sch20}; it is equivalent to Petty's conjecture for the volume of the projection body when $n=3$.

It was shown in \cite{HLPRY23,HLPRY23_2,LPRY23,LX24,LMU24,JH24,LRZ24,YZZ25}, that the $m$th-order concept has led to the burgeoning $m$th-order Brunn-Minkowski theory. Mainly, the $m$th-order concept is extended to other functionals from the $L^p$ Brunn-Minkowski theory; the $L^p$ Brunn-Minkowski theory has its origins in the work by Firey \cite{Firey62}, Lutwak \cite{LE93,LE96}, Lutwak and Zhang \cite{LZ97} and Lutwak, Yang and Zhang \cite{LYZ00,LYZ02,LYZ04,LYZ04_2,LYZ04_3,LYZ05,LYZ05_2}.  (This list is by no means exhaustive!) 

There are two inequalities in particular that will be pertinent. We denote by $\conbodio[d]$ the set of convex bodies in $\R^d$ that contain the origin in their interiors. We will take $d\in \{n,nm\}$. For $p\geq 1,$ the $L^p$ projection body of $K$ is the origin-symmetric convex body $\Pi^\circ_p K\in \conbodio[n]$ whose gauge, which is a norm, is given by
\[
\|\theta\|_{\Pi^\circ_p K}= \left(\int_{\s}\left(\frac{|u\cdot \theta|}{h_K(u)}\right)^ph_K(u)d\sigma_K(u)\right)^\frac{1}{p}.
\]
Here, $\sigma_K$ is the surface area measure of $K\in\conbodo[n]$ on $\s$, defined as, for $E\subset\s$ Borel,
\begin{equation}\sigma_K(E)=\int_{n^{-1}_K(E)}d\mathcal{H}^{n-1}(y),
\label{eq:surface_area}
\end{equation}
where $\mathcal{H}^{n-1}$ is the $(n-1)$-dimensional Hausdorff measure and $n_K$ is the Gauss map of $K$ which pushes $\partial K$, the boundary of $K$, to $\s$. The formula of the volume of a convex body $K$ is then given by 
\begin{equation}
    \vol[n](K)=\frac{1}{n}\int_{\s}h_K(u)d\sigma_K(u).
    \label{eq:convex_vol}
\end{equation}

 The following inequality, the so-called $L^p$ Petty's projection inequality, was shown in \cite{LYZ00}, extending the $p=1$ case by Petty \cite{CMP71}.

\noindent {\bf Theorem C}
\textit{
Let $p\geq 1$. Then, for every $K\in\conbodio[n]$, we have
\[
\vol[n](K)^{\frac{n-p}{p}}\vol[n](\Pi^\circ_p K) \leq \vol[n](\B)^{\frac{n-p}{p}}\vol[n](\Pi^\circ_p \B),
\]
with equality if and only if $K$ is a linear (or affine if $p=1$) image of $\B$.
}
\vskip 3mm
\noindent The body $\Pi^\circ_p \B$ is a dilate of $\B$ whose radius, under the normalization used presently, is $\left(\frac{\omega_{p-1}}{2\omega_{n+p-2}}\right)^\frac{1}{p}$. An important application of Theorem C is the $L^p$ Affine Sobolev inequality from \cite{GZ99,LYZ00,TW12}: suppose $w \in W^{1,p}(\R^n)$, that is $w$ has a weak derivative $\nabla w$ that is in $L^p(\R^n).$ Then, if $1 < p < n$,  one has 
 \begin{equation} \label{eq:LYZAffineSobolev}
 \begin{split}
\left(\frac{n\omega_n\omega_{p-1}}{2\omega_{n+p-2}}\right)^\frac{1}{p}\!\left(\int_{\s}\! \left(\int_{\Rn}|\langle \nabla w(v),\theta\rangle|^p dv \right)^{-\frac{n}{p}}\!\frac{d\theta}{n\omega_n}\right)^{-\frac{1}{n}}\! \geq \!a_{n,p}\|w\|_{L^\frac{np}{n-p}(\R^n)}.
\end{split}
 \end{equation}
Here, $a_{n,p}$ is the sharp constant from the Aubin-Talenti $L^p$ Sobolev inequality \cite{Aubin1,Talenti1}: one has $a_{n,1}=n\omega_n^\frac{1}{n}$, and, for $p>1$,
\begin{equation}
    a_{n,p} = n^\frac{1}{p}\left(\frac{n-p}{p-1}\right)^\frac{p-1}{p}\left(\frac{\omega_n}{\Gamma(n)}\Gamma\left(\frac{n}{p}\right)\Gamma\left(n+1-\frac{n}{p}\right)\right)^\frac{1}{n},
    \label{eq:sobolev_cons}
\end{equation}
In fact, \eqref{eq:LYZAffineSobolev} is sharper than the $L^p$ Sobolev inequality. There is an equality in \eqref{eq:LYZAffineSobolev} when $w$ is of the form $w(v) = \left(\alpha + |A(v-v_0)|^{\frac{p}{p-1}}\right)^{-\frac{n-p}{p}},$ with $A$ a non-singular $n\times n$ matrix, $\alpha >0$, and $v_0 \in \Rn$. If $p=1$, then the inequality can be extended to functions of bounded variation, in which case equality holds if and only if $w$ is a multiple of $\chi_E$ for some $n$-dimensional ellipsoid $E$. Here, the characteristic function of $E$ is given by $\chi_E(x)=1$ if $x\in E$ and $0$ otherwise.

In \cite{HLPRY23_2}, the following extension of the body $\Pi^\circ_p K$ and the $L^p$ Petty's projection inequality to the setting of matrices was established.
\begin{definition}
\label{def:hi_gauge}
 For $K\in\conbodio[n]$, $Q\in\conbodo[m]$ and $p\geq 1$, its $(L^p,Q)$ polar projection body $\PP K\in\conbodio[nm]$ is given by the gauge $$\|\Theta\|_{\PP K } = \left( \int_{\s} \left(\frac{h_Q(\Theta^t\xi)}{h_K(\xi)}\right)^p h_K(\xi)d\sigma_K (\xi)\right)^\frac 1p .$$   
\end{definition}

 We note that the operator $\PP$ goes from $\conbodio[n]$ to $\conbodio[nm]$. The connection between the work of Schneider and $\PP$ is a bit tenuous, but it was shown in \cite{HLPRY23} that they are related when $p=1$ and $Q$ is the orthogonal simplex of dimension $m$. The associated isoperimetric inequality for $\PP$, the $(L^p,Q)$ Petty's projection inequality, was shown in \cite[Theorem 1.3]{HLPRY23_2}.
 \vskip 2mm
 \noindent {\bf Theorem D} 
 \textit{Let $m,n \in \N$ and $p \geq 1$. Then, for any pair of convex bodies $K \in \conbodio[n]$ and $Q \in \conbodo[m]$, one has 
\begin{equation}\label{e:PPIGeneral}
\Vol(\PP K ) \vol(K)^{\frac{nm} p -m} \leq \Vol(\PP \B ) \vol(\B)^{\frac{nm} p -m}.
\end{equation}
There is equality if and only if $K$ is a linear (or affine if $p=1$) image of $\B$.
}
\vskip 3mm
The aforementioned $L^p$ Petty's projection inequality is recovered by setting $m=1$ and $Q=[-1,1]$. But also, setting $Q$ to be any interval in $\R$ containing the origin, Theorem D recovers the asymmetric $L^p$ Petty's projection inequality by Haberl and Schuster \cite{HS09}. An application of Theorem D is the $(L^p,Q)$ Sobolev inequality \cite[Theorem 1.1]{HLPRY23_2}, which contains both \eqref{eq:LYZAffineSobolev} and the asymmetric $L^p$ Sobolev inequality by Haberl and Schuster \cite{HS2009} in a larger framework.
% : Fix $m,n \in \N,$ $p \in [1,n)$, and a $m$-dimensional convex body $Q$ containing the origin. Consider a non-constant function $w\in W^{1,p}(\Rn)$. Then:
% \begin{equation}
% \label{eq:sobolev}
% \begin{split}
% (n\omega_n)^{\frac{1}{p}}\left(nm\Vol(\PP \B )\right)^\frac 1 {nm}&\left(\int_{\mathbb{S}^{nm-1}} \left( \int_{\Rn} h_Q(\Theta^t\nabla w(z))^pdz \right)^{-\frac{nm} p } d\Theta\right)^{-\frac 1 {nm}} 
% \\
% &\geq a_{n,p}\|w\|_{L^\frac{np}{n-p}(\R^n)}.
% \end{split}
% \end{equation}
% The equality conditions are the same as the $Q=[-1,1]$ case, which is \eqref{eq:LYZAffineSobolev}. 

We mention two key tools that can be used to prove \eqref{eq:LYZAffineSobolev}, as this approach will prove fruitful to our current considerations. The first is the $LYZ$ body. Given a function $w \in W^{1,p}(\R^n)$ which is non-constant, Lutwak, Yang and Zhang showed \cite{LYZ06} the existence of its $p$th $LYZ$-body: it is the unique, origin-symmetric convex body $\langle w\rangle_p\in\conbodio[n]$, satisfying the following change of variables formula 
\begin{equation}\int_{\s}\left(\frac{g(u)}{h_{\langle w \rangle_p}(u)}\right)^ph_{\langle w \rangle_p}(u)d\sigma_{\langle w \rangle_p}(u)=\int_{\R^n}g(-\nabla w(v))^pdv
\label{eq:LYZw}
\end{equation}
for every $1$-homogeneous, nonnegative function \textit{even} $g$ on $\R^n$. The anisotropic-Sobolev inequality from \cite{MS86,CENV04} implies that
\begin{equation}
\label{eq:bound_LYZ}
\vol(\langle w\rangle_p)^{\frac{n-p}{np}}\geq \frac{a_{n,p}}{n^\frac{1}{p}\omega_n^{\frac{1}{n}}}\|w\|_{L^\frac{np}{n-p}(\R^n)}.\end{equation}
Setting in \eqref{eq:LYZw} $g(u)=|u\cdot \theta|$ for a fixed $\theta\in\s$, applying Theorem C to the body $\langle w \rangle_p$ and then \eqref{eq:bound_LYZ} yields the $L^p$ affine Sobolev inequality \eqref{eq:LYZAffineSobolev}.

As for the Sobolev inequality from \cite{HLPRY23_2}  (and \cite{HS2009} for the case where $Q$ is an asymmetric interval), $h_Q$ will only be even if $Q$ is origin-symmetric. Thus, a few technical details were introduced to overcome this hurdle. Later in Section~\ref{sec:other_results}, we will consider only origin-symmetric $Q$, putting our situation closer to the framework by Lutwak, Yang and Zhang. The above argument we sketched out will be helpful for our present considerations.

% . Using a change-of-variables formula from \cite{LYZ06}, it was shown in \cite{HS2009,HLPRY23_2} that one can define the asymmetric $p$th LYZ of a non-constant $w \in W^{1,p}(\R^n)$, the convex body $\langle w \rangle_p\in\conbod$ to be the unique one satisfying \eqref{eq:LYZw} in place of $\langle w \rangle_p^e$ when $g$ is not even; however, for $1<p<n$, this body may contain the origin on its boundary. Therefore, one cannot directly apply Theorem D to this body. Instead,  define, for $p\geq 1$ and $Q\in\conbodo[m]$ the $(L^p,Q)$ polar projection body of a weakly differentiable, nonnegative function $w:\R^n\to\R$, $\PP w,$ via the gauge
% \begin{equation}
%     \|\Theta\|_{\PP w}=\left(\int_{\R^n} h_Q(\Theta^t\nabla w(z))^pdz \right)^{\frac{1} p }.
%     \label{eq:projection_body_fun}
% \end{equation}
% Then, it was shown in \cite[Proposition 5.3]{HLPRY23_2} that Theorem D holds with $K=\langle w\rangle_p$ and $\PP K$ is replaced by $\PP w$. This process involved an approximation argument, and the characterization of equality is therefore lost. The inequality \eqref{eq:bound_LYZ} also holds with $\langle w\rangle_p^e$ replaced by $\langle w\rangle_p$, however again one has lack of equality characterization. Combining these two therefore, while yielding \eqref{eq:sobolev}, does not yield equality characterization. A different approach, which we will not discuss, was therefore required. 

First, recall the sharp Gagliardo-Nirenberg inequality established by Del Pino and Dolbeault \cite{DD02} in the case of the Euclidean ball and by Cordero-Erausquin, Nazaret, and Villani \cite{CENV04} in general: let $K$ be an origin-symmetric convex body such that $\vol[n](K)=\omega_n$. Then, if $1\leq p <n$ and $w\in W^{1,p}(\R^n)$ is a non-constant function there exists a sharp constant $c(n,r,p)$ such that, for every $0<r\leq \frac{np}{n-p}$ it holds
\begin{equation}
    \label{eq:CNV}
    \left(\int_{\R^n}h_K(\nabla w(x))^pdx\right)^\frac{1}{p} \geq c(n,r,p)\|w\|_{L^q(\R^n)}^{1-s}\|w\|_{L^r(\R^n)}^s,
\end{equation}
where 
\begin{equation}
\label{eq:q_param}
q=1+r\frac{p-1}{p},
\end{equation}
and $s$ is for scale invariance, i.e. 
\begin{equation}\frac{n-p}{np}=\frac{1-s}{q}+\frac{s}{r}.
\label{eq:s_param}
\end{equation} The constraining on the size of $K$ ensures the constant $c(n,r,p)$ is independent of $K$.
There is equality only for functions of the form 
\begin{equation}
\label{CNV_equality}
w_{a,b,K}(x):= bp_{\frac{p}{p-1},2-\frac{r}{p}}\left(a\|x-x_0\|_{K}\right)
\end{equation}
for some $a,b>0$ and $x_0\in\R^n$. The function $p_{p,\lambda}$ was given in \eqref{eq:the_formula}; notice the cases $\lambda \neq 1$ and $\lambda=1$ become $p \neq r$ and $p = r$.

Consider the case of \eqref{eq:CNV} when $K=\left(\frac{\omega_n}{\vol[n](\langle w \rangle_p)}\right)^{\frac{1}{n}}\langle w \rangle_p$. Then, \eqref{eq:LYZw} and \eqref{eq:convex_vol} yield
\begin{equation}
    \label{eq:CNV_2}
    \vol[n](\langle w \rangle_p)^\frac{n-p}{np} \geq n^{-\frac{1}{p}}\omega_n^{-\frac{1}{n}}c(n,r,p)\|w\|_{L^q(\R^n)}^{1-s}\|w\|_{L^r(\R^n)}^s,
\end{equation}
with equality if and only if $w$ has the form \eqref{CNV_equality} for some $K$. This result is precisely \cite[Theorem 6.1]{LYZ06}; in fact, this is their proof, which we have included here for completeness because it is short. We need \eqref{eq:CNV_2} for this next step: using \eqref{eq:CNV_2} and Theorem D for an origin-symmetric $Q\in\conbodio[m]$, we obtain for $1\leq p <n$ that
\begin{equation}
    \label{eq:CNV_3}
    \vol[nm](\PP \langle w \rangle_p)^{-\frac{1}{mn}} \geq \Vol(\PP \B )^{-\frac{1}{nm}} (n\omega_n)^{-\frac{1}{p}} c(n,r,p)\|w\|_{L^q(\R^n)}^{1-s}\|w\|_{L^r(\R^n)}^s.
\end{equation}
There is equality if and only if $w$ has the form \eqref{CNV_equality} with $K=\B$ (one can verify that $\langle w_{a,b,\B} \rangle_p$ is a Euclidean ball). The inequality \eqref{eq:CNV_3} can be seen as an extension of \cite[Theorem 7.2]{LYZ06}. We mention other Sobolev-type inequalities can be found in \cite{DD02,CEGH04,CENV04,CER23}.

We now discuss the second inequality from convex geometry that is relevant to our considerations. The Blaschke-Santal\'o inequality states that if $K$ is a convex body such that $K$ or $K^\circ$ has center of mass at the origin, then it holds
\begin{equation}
\label{eq:BS}
\vol(K)\vol(K^\circ) \leq \vol(\B)^2,
\end{equation}
with equality if and only if $K$ is a linear image of $\B$. Please, see the survey \cite{FMZ23} for the history of this inequality. 

Lutwak and Zhang \cite{LZ97} extended \eqref{eq:BS} to both the $L^p$ setting and the setting of star bodies. Recall that $L\subset \R^d$ is a star body if $L$ is compact, $o\in L,$ $x\in L$ implies the segment $[o,x]\subset L$, and $x\mapsto \|x\|_L^{-1}$ is continuous on $\R^d\setminus\{o\}$. Every convex body is a star body. The radial function of a star body $L$ is precisely $\rho_L=\|\cdot\|_L^{-1}$. That is
\[
\rho_L(u)=\sup\{t>0:tu\in L\}.
\]
We denote by $\mathcal{S}^d$ the set of star bodies in $\R^d$. The volume of a star body is given by
\begin{equation}
    \label{eq:volume_star}
    \vol[d](L) = \frac{1}{d}\int_{\mathbb{S}^{d-1}}\rho_L(u)^d du.
\end{equation}

For $p\geq 1$, Lutwak and Zhang defined the $L^p$ centroid body $\Gamma_p L\in \conbodio[n]$, which is origin-symmetric, of a star body $L$ via the support function
$$h_{\Gamma_p L}(\theta) = \left(\frac{1}{\vol[n](L)}\int_L |\theta\cdot x|^pdx\right)^\frac{1}{p}.$$
Notice that $(\Gamma_p L)^\circ \to L^\circ \cap (-L)^\circ$ when $p\to \infty$. The main result of Lutwak and Zhang was to generalize \eqref{eq:BS} by replacing $K$ with a star body $L$ and replacing $K^\circ$ with $(\Gamma_p L)^\circ$. 

We now recall the recently shown analogue of this fact in the setting of matrices.
\begin{definition}
  \label{d:generalcentroidbody}
  Let $p \geq 1$, $m \in \N$, and fix some $Q \in \conbodo[m]$. Given a compact set $L\subset M_{n,m}(\R)$ with positive volume, the $(L^p, Q)$-centroid body of $L$, $\G L$, is the convex body in $\R^n$ with the support function 
  \begin{equation}
  h_{\G L}(v)= \left(\frac 1 {\Vol(L)}\int_L h_Q(A^tv)^p dA\right)^\frac{1}{p}.
  \label{eq:cen_hi}
\end{equation}
\end{definition}

With this definition in hand, we have the following extension of \eqref{eq:BS} from \cite{HLPRY23_2}. In addition to the $Q=[-1,1]$ cases studied in \cite{LZ97,LYZ00}, Haberl and Schuster analyzed the case $m=1$ in its entirety \cite{HS09}; in our language, their work is when $Q=[-\alpha_1,\alpha_2],\alpha_i\geq 0$. We highlight that $\G$ takes a compact set in $M_{n,m}(\R)$ and produces a convex body in $\R^n$. We use the natural notation $\G^\circ L=(\G L)^\circ$.
\begin{lemma}[The $m$th-order $L^p$ Santal\'o inequality]
\hfill\break
\label{l:hipBS}
  Fix $p\geq 1$ and $Q\in\conbodo[m].$ Consider a compact set with non-empty interior $L\subset M_{n,m}(\R)$ with positive volume. Then, if $\G L$ or $\G^\circ L$ has center of mass at the origin,
  \begin{equation}
  \begin{split}\Vol(L)^{\frac 1 m }\vol(\G^\circ L)\leq \vol(\B)^2 \frac{\Vol(\PP \B )^{\frac 1 m }}{\vol(\G \PP \B )},
  \end{split}
\label{eq:BS_2}
\end{equation}
with equality throughout if and only if $L = \PP E $ for some origin-symmetric ellipsoid $E \in \conbodio[n]$.
\end{lemma}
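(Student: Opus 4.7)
My plan is to combine two inequalities. On one side, the classical Blaschke-Santal\'o inequality \eqref{eq:BS} applied to the convex body $\G L\in\conbodo[n]$ (legitimate under the centroid hypothesis) yields
\[
\vol(\G L)\,\vol(\G^\circ L)\le\vol(\B)^2.
\]
On the other side, I would establish the sharp lower bound
\[
\vol(\G L)\ge\frac{\vol(\G \PP \B)}{\Vol(\PP \B)^{1/m}}\,\Vol(L)^{1/m}.
\]
Solving the first for $\vol(\G^\circ L)$, multiplying by $\Vol(L)^{1/m}$, and substituting the second produces exactly \eqref{eq:BS_2}, so the substance of the proof lies in establishing the lower bound.

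To prove the lower bound, I first derive a Fubini-type duality identity: integrating \eqref{eq:cen_hi} against the measure $d\sigma_M(v)/h_M(v)^{p-1}$ on $\s$ for an auxiliary body $M\in\conbodio[n]$, and swapping the order of integration, produces
\[
\int_{\s}h_{\G L}(v)^p\,\frac{d\sigma_M(v)}{h_M(v)^{p-1}}=\frac{1}{\Vol(L)}\int_L\|A\|_{\PP M}^p\,dA.
\]
Taking $M=\G L$ collapses the left side to $n\,\vol(\G L)$ by \eqref{eq:convex_vol}, giving the key identity
\[
n\,\vol(\G L)\,\Vol(L)=\int_L\|A\|_{\PP(\G L)}^p\,dA.
\]

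Next, since the map $A\mapsto\|A\|_{\PP(\G L)}^p$ is a radial function on $\R^{nm}$ whose sublevel sets are dilates of $\PP(\G L)\in\conbodio[nm]$, a standard Hardy--Littlewood-type rearrangement bounds the integral from below by its value on the dilate of $\PP(\G L)$ having volume $\Vol(L)$; polar coordinates in $\R^{nm}$ then produce
\[
\int_L\|A\|_{\PP(\G L)}^p\,dA\ge\frac{nm}{nm+p}\,\Vol(L)^{(nm+p)/(nm)}\,\Vol(\PP(\G L))^{-p/(nm)},
\]
with equality iff $L$ is itself a dilate of $\PP(\G L)$. Finally, Theorem D applied to $K=\G L$ bounds $\Vol(\PP(\G L))$ above by $\Vol(\PP \B)\bigl(\vol(\B)/\vol(\G L)\bigr)^{nm/p-m}$; substituting and simplifying the exponents of $\vol(\G L)$ yields an inequality of the form $\vol(\G L)^{p/n}\ge c\,\Vol(L)^{p/(nm)}\Vol(\PP \B)^{-p/(nm)}\vol(\B)^{p/n-1}$. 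The constant $c$ is identified by noting that all three intermediate inequalities are simultaneously sharp when $L=\PP \B$ (then $\G L$ is a Euclidean ball, $\PP(\G L)$ is a dilate of $\PP \B=L$, and Theorem D is an equality), which after raising to the power $n/p$ pins the lower bound to the form displayed above.

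The main obstacle I anticipate is the equality analysis. Simultaneous equality forces: (i) $L=r\,\PP(\G L)$ from rearrangement; (ii) $\G L$ equal to an origin-symmetric ellipsoid $E$ (or affine image of $\B$ when $p=1$) from Theorem D; and (iii) equality in Blaschke-Santal\'o, which is then automatic. Combining (i)--(ii) gives $L=r\,\PP E$; for $p\neq n$ the scalar $r$ is absorbed into the ellipsoid via the homogeneity $\PP(cE)=c^{(p-n)/p}\PP E$, yielding $L=\PP E'$ for an origin-symmetric ellipsoid $E'$, matching the stated equality case. The borderline case $p=n$, where $\PP$ is scale-invariant, would require separate bookkeeping since scalings of $\PP E$ are no longer themselves in the image of $\PP$ acting on ellipsoids.
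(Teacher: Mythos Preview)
The paper does not prove Lemma~\ref{l:hipBS}; it is quoted from \cite{HLPRY23_2} as a known input. So there is no in-paper argument to compare against. That said, your proposal is correct and is essentially the standard route by which such results are obtained (and almost certainly the one in \cite{HLPRY23_2}): the Fubini identity
\[
n\,\vol(\G L)\,\Vol(L)=\int_L\|A\|_{\PP(\G L)}^{\,p}\,dA
\]
is exactly the $L^p$ duality between the $(L^p,Q)$ centroid and projection operators, the bathtub/Hardy--Littlewood lower bound on the right-hand side is valid with the constant you wrote, and feeding in Theorem~D together with the classical Blaschke--Santal\'o inequality \eqref{eq:BS} yields \eqref{eq:BS_2}. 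Your identification of the constant by plugging in $L=\PP\B$ is consistent with \eqref{eq:elipp_cal}: indeed $\vol(\G\PP\B)=\bigl(\tfrac{m}{nm+p}\bigr)^{n/p}\vol(\B)^{1-n/p}$, which matches the coefficient your chain produces.

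Your worry about $p=n$ is not a defect of your argument but a genuine imprecision in the equality clause as stated. Since $\G(rL)=r\,\G L$, the quantity $\Vol(L)^{1/m}\vol(\G^\circ L)$ is dilation invariant in $L$, so every dilate $r\,\PP E$ is an equality case; for $p\neq n$ one absorbs $r$ via $\PP(cE)=c^{(p-n)/p}\PP E$ and recovers the stated form, but for $p=n$ this fails and the correct equality condition is ``$L$ equals a dilate of $\PP E$ up to a null set''. Your equality analysis (rearrangement forces $L$ to be a dilate of $\PP(\G L)$; Theorem~D and \eqref{eq:BS} force $\G L$ to be a centered ellipsoid) is otherwise complete.
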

The requirement that $\G L$ or $\G^\circ L$ has center of mass at the origin in Lemma~\ref{l:hipBS} is automatically satisfied when $Q$ or $L$ is symmetric. When $E$ is a centered ellipsoid, it was shown that 
\begin{equation}\G \PP E = \vol(\B)^{\frac{1}{p}}\vol(E)^{-\frac 1 {p}} \left(\frac m {\vol(\B)(nm+p)}\right)^{\frac 1p } E.
\label{eq:elipp_cal}
\end{equation}
  This yields $\lim_{p\to\infty}\G\PP E=E$. 

  In the next section, we show how the $m$th-order $L^p$ Santal\'o inequality in Lemma~\ref{l:hipBS} yields our main result, Theorem~\ref{t:main}. We need a few more fundamental facts. Given a measurable function, we can associate a star body to it. If $f$ is a nonnegative, measurable function on $\R^n$, then its $p$th Ball body, for $p>0$, is the star-shaped set given by
\begin{equation}
    K_f(p) = \left\{x\in\R^n:\int_0^\infty f(rx)r^{p-1}dr\geq 1\right\},
\end{equation}
and thus the radial function of $K_f(p)$ is given by
\begin{equation}
    \label{eq:ball_body}
    \rho_{K_f(p)}(\theta) = \left(\int_0^\infty f(r\theta)r^{p-1}dr\right)^\frac{1}{p}.
\end{equation}
If the integral is finite for every $v\in\s$, then $K_f(p)$ is a star-body. In \cite{Ball88,GZ98} it was shown that if $f$ is log-concave, then $K_f(p)$ is a convex body. A nonnegative function $f$ being log-concave is precisely that, for every $x,y$ such that $f(x)f(y)>0$ and $\lambda \in [0,1],$ one has
\[
f((1-\lambda)x+\lambda y)\geq f(x)^{1-\lambda}f(y)^{\lambda}.
\]

  We next loosen the definition of star bodies. For $s\in\R$, we say $L\subset\R^d$ is an $(s)$-star, and write $L\in \sta[d]_s$ if it is star-shaped and its radial functional is merely in $L^{s}(\s)$. Notice that $\sta[d]\subset \cup_{s\in \R}\sta[d]_s$. For $p\geq 1$, the dual mixed volume of $K\in \sta[d]_{d+p}$ and $L\in \sta[d]_{-p}$ is given by 
\begin{equation}
  \widetilde V _{-p,d}(K,L)=\frac 1d \int_{\mathbb{S}^{d-1}}\rho_K (\theta)^{d+p}\rho_L(\theta)^{-p} d\theta.
  \label{eq:dual_mixed}
\end{equation}
With the same assumptions on $K,L$ and $p,$ one has the dual Minkowski's first inequality:
\begin{equation}
 \widetilde V _{-p,d}(K,L)^d \geq \vol[d](K)^{d+p}\vol[d](L)^{-p},
  \label{dual_Min_first}
\end{equation}
with equality if and only if $K$ and $L$ are dilates. When $K$ and $L$ are star bodies, this was shown by Lutwak \cite{Lut75} using H\"older's inequality; the generality presented here was shown in \cite{LYZ04_3}.

\section{Proof of our main results}
\label{sec:proofs}
To prove our main results, we start with the following inequality.
\begin{lemma}
\label{l:LZBS}
    Fix $n,m\in\N$ and $p\geq 1$. Let $Q\in\conbodo[n],K\in\sta[n]_{n+p}$ and let $L\subset  M_{n,m}(\R)$ be a compact set with non-empty interior. Then, if $\G L$ or $\G^\circ L$ has center of mass at the origin, one has
     \begin{align*}
        \int_K\int_L&h_Q(A^tu)^p dA du 
        \\
        &\geq \frac{nm}{(n+p)(nm+p)}\frac{\vol[nm](L)\vol[n](K)}{\vol(\B)}\left(\frac{\vol[nm](L)}{\Vol(\PP \B )}\left(\frac{\vol[n](K)}{\vol[n](\B)}\right)^m\right)^\frac{p}{nm},
    \end{align*}
    with equality if and only if one has up to sets of measure zero that $K$ is a dilate of $\G^\circ L$ and $L=\PP E$ for some centered ellipsoid $E\in\conbodio[n]$.
\end{lemma}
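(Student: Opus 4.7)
The plan is to unfold the double integral through the $(L^p,Q)$-centroid body, then invoke the dual Minkowski inequality and the $m$th-order $L^p$ Santal\'o inequality (Lemma~\ref{l:hipBS}) in succession. By the very definition of $\G L$, the inner integral is
\[
\int_L h_Q(A^t u)^p dA = \Vol(L)\, h_{\G L}(u)^p = \Vol(L)\, \|u\|_{\G^\circ L}^p,
\]
using $h_{\G L}=\|\cdot\|_{\G^\circ L}$, so the target quantity equals $\Vol(L)\int_K h_{\G L}(u)^p du$. Since $K\in\sta[n]_{n+p}$, switching to polar coordinates gives
\[
\int_K h_{\G L}(u)^p du = \frac{1}{n+p}\int_{\s} \rho_K(\theta)^{n+p}\rho_{\G^\circ L}(\theta)^{-p}d\theta = \frac{n}{n+p}\,\widetilde V_{-p,n}(K,\G^\circ L),
\]
via the identity $h_{\G L}(\theta)=\rho_{\G^\circ L}(\theta)^{-1}$ and \eqref{eq:dual_mixed}.

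Next I would apply the dual Minkowski inequality \eqref{dual_Min_first} (valid since $K\in\sta[n]_{n+p}$ and $\G^\circ L\in\sta[n]_{-p}$) to obtain
\[
\widetilde V_{-p,n}(K,\G^\circ L) \geq \vol[n](K)^{(n+p)/n}\vol[n](\G^\circ L)^{-p/n},
\]
with equality precisely when $K$ is a dilate of $\G^\circ L$. This produces a lower bound involving $\vol[n](\G^\circ L)^{-p/n}$, which I then turn into the desired shape by using the $m$th-order $L^p$ Santal\'o inequality~\eqref{eq:BS_2}, whose hypothesis is exactly the centering assumption in Lemma~\ref{l:LZBS}. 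That gives
\[
\vol[n](\G^\circ L)^{-p/n} \geq \left(\frac{\vol(\G\PP\B)\,\Vol(L)^{1/m}}{\vol(\B)^{2}\Vol(\PP\B)^{1/m}}\right)^{p/n},
\]
with equality if and only if $L=\PP E$ for some origin-symmetric ellipsoid $E$.

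It remains to simplify the constant by substituting the explicit formula \eqref{eq:elipp_cal} applied to $E=\B$, namely
\[
\G\PP\B = \left(\frac{m}{\vol(\B)(nm+p)}\right)^{1/p}\B, \qquad \vol(\G\PP\B)^{p/n} = \frac{m}{nm+p}\cdot \frac{\vol(\B)^{p/n}}{\vol(\B)}.
\]
Multiplying the three bounds together and collecting powers of $\vol(\B)$, $\vol[n](K)$, $\Vol(L)$, and $\Vol(\PP\B)$ gives exactly the claimed right-hand side with coefficient $\frac{nm}{(n+p)(nm+p)}$. Equality holds iff both intermediate inequalities are equalities, yielding the stated characterization: $K$ is a dilate of $\G^\circ L$ and $L=\PP E$ for some centered ellipsoid $E$.

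The proof is essentially a clean cascade of three known facts, so there is no real conceptual obstacle; the only point demanding care is the bookkeeping in the final step, where one must verify that the exponents of $\vol(\B)$ coming from the Santal\'o constant and from the evaluation of $\vol(\G\PP\B)$ combine to reproduce the factor $\vol(\B)^{-1}(\vol[n](K)/\vol(\B))^{pm/(nm)}$ in the target.
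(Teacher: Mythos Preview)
Your proof is correct and follows essentially the same approach as the paper: rewrite the double integral via the definition of $\G L$ and polar coordinates as $\frac{n}{n+p}\Vol(L)\,\widetilde V_{-p,n}(K,\G^\circ L)$, apply the dual Minkowski inequality \eqref{dual_Min_first}, and then Lemma~\ref{l:hipBS} together with \eqref{eq:elipp_cal}. The only difference is cosmetic---you identify the centroid body before passing to polar coordinates, whereas the paper does these in the opposite order---and your bookkeeping on the constants is accurate.
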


\begin{proof}
    Observe that we have
    \begin{align*}
        &\int_K\int_Lh_Q(A^tu)^p dA du = \int_{\s}\int_{L}h_Q(A^tv)^pdA\left(\int_{0}^{\rho_K(v)}r^{n+p-1}dr\right)dv
        \\
        &=\frac{1}{n+p}\int_{\s}\int_{L}h_Q(A^tv)^pdA\rho_K(v)^{n+p}dv
        \\
        &=\frac{\vol[nm](L)}{n+p}\int_{\s}\left(\frac{1}{\vol[nm](L)}\int_{L}h_Q(A^tv)^pdA\right)\rho_K(v)^{n+p}dv
         \\
        &=\frac{\vol[nm](L)}{n+p}\int_{\s}\rho_{\G^\circ L}(v)^{-p}\rho_K(v)^{n+p}dv,
    \end{align*}
    where, in the final line, we used that $\rho_{\G^\circ L}(v) = h_{\G L}(v)^{-1}$ and inserted \eqref{eq:cen_hi}. Then, from \eqref{eq:dual_mixed} and \eqref{dual_Min_first}, we obtain
    \begin{align*}
        \int_K\int_Lh_Q(A^tu)^p dA du &=\frac{\vol[nm](L)}{n+p}\int_{\s}\rho_{\G^\circ L}(v)^{-p}\rho_K(v)^{n+p}dv
        \\
        &=\frac{n}{n+p}\vol[nm](L)\widetilde V _{-p,n}(K,\G^\circ L)
        \\
        &\geq \frac{n}{n+p}\vol[nm](L)\vol[n](K)^\frac{n+p}{n}\vol[n](\G^\circ L)^{-\frac{p}{n}},
    \end{align*}
    with equality if and only if $K$ is a homothet of $\G^\circ L$, up to sets of measure zero. The claim then follows from Lemma~\ref{l:hipBS} and \eqref{eq:elipp_cal}.
\end{proof}
We are now in a position to prove Theorem~\ref{t:BS_on_sphere}.

\begin{proof}[Proof of Theorem~\ref{t:BS_on_sphere}]
    Observe that, for $K\in\sta[n]_{n+p}$ and a compact set with non-empty interior $L\subset  M_{n,m}(\R)$ such that either $\G L$ or $\G^\circ L$ has center of mass at the origin, one has
    \begin{align*}
    &\int_{\s}\int_{\mathbb{S}^{nm-1}}h_Q(U^tv)^p\rho_K(v)^{n+p}\rho_L(U)^{nm+p}dUdv
    \\
    &=(nm+p)(n+p) \int_{\s}\int_{\mathbb{S}^{nm-1}}\left(h_Q(U^tv)^p\int_{0}^{\rho_K(v)}r^{n+p-1}dr\int_{0}^{\rho_L(U)}s^{nm+p-1}ds\right)dUdv
    \\
    &=(nm+p)(n+p)\int_K\int_Lh_Q(A^tu)^p dA du.
    \end{align*}
Therefore, applying Lemma~\ref{l:LZBS}, we have
\begin{equation}
\begin{split}
    &\int_{\s}\int_{\mathbb{S}^{nm-1}}h_Q(U^tv)^p\rho_K(v)^{n+p}\rho_L(U)^{nm+p}dUdv
    \\
    &\geq nm\frac{\vol[nm](L)\vol[n](K)}{\vol(\B)}\left(\frac{\vol[nm](L)}{\Vol(\PP \B )}\left(\frac{\vol[n](K)}{\vol[n](\B)}\right)^m\right)^\frac{p}{nm} .
    \end{split}
    \label{eq:almost_functional_matrices}
\end{equation}
    For given $f\in L^1(\mathbb{S}^{nm-1})$ and $g\in L^1(\s),$ define $K\in \mathcal{S}^n_{n+p}$ and $L\in \mathcal{S}^{nm}_{nm+p}$ via $\rho_K=g^\frac{1}{n+p}$ and $\rho_L=f^\frac{1}{nm+p}$. Note that $\G L$ and $\G^\circ L$ are origin-symmetric. Then, observe that
    \begin{align*}
        \|f\|_{L^\frac{nm}{nm+p}\left(\mathbb{S}^{nm-1}\right)} &= \left(\int_{\mathbb{S}^{nm-1}}f^\frac{nm}{nm+p}(U)dU\right)^\frac{nm+p}{nm}
        \\
        & = (nm)^\frac{nm+p}{nm}\left(\frac{1}{nm}\int_{\mathbb{S}^{nm-1}}\rho_L^{nm}(U)dU\right)^\frac{nm+p}{nm}
        \\
        & = (nm\vol[nm](L))^\frac{nm+p}{nm}.
    \end{align*}
    Similarly, we have that
    \begin{align*}
        \|g\|_{L^\frac{n}{n+p}\left(\s\right)} &= \left(\int_{\s}g^\frac{n}{n+p}(v)dv\right)^\frac{n+p}{n}
        = n^\frac{n+p}{n}\left(\frac{1}{n}\int_{\s}\rho_K^{n}(v)dv\right)^\frac{n+p}{n}
        \\
        & = (n\vol[n](K))^\frac{n+p}{n}.
    \end{align*}
    Inserting these computations into \eqref{eq:almost_functional_matrices} completes the proof of the inequality. As for the equality conditions, we must have that $K$ is a dilate of $\G^\circ L$ and $L=\PP E$ for some centered ellipsoid $E=A\B$ (up to null sets). This means from \eqref{eq:elipp_cal} that $K$ is a dilate of $(A\B)^{\circ}=A^{-t}\B$. Recalling that $g=\rho_K^{n+p},$ we have $g(u)=c_1\|u\|_{A^{-t}B_2^n}^{-(n+p)}=c_1|A^tu|^{-(n+p)}$. For $L$, write $L=\PP (A\B)$. Then, it was shown \cite[Proposition 3.6]{HLPRY23_2} that $\PP (AK) = |\det(A)|^{-\frac{1}{p}}A\PP K.$ Recalling that $f=\rho_L^{nm+p}$, we have almost everywhere that
    \begin{align*}f(U)&= c_2\|U\|_{A\PP \B}^{-(nm+p)} =c_2\|A^{-1}U\|_{\PP \B}^{-(nm+p)}.
    \end{align*}
\end{proof}
As advertised in the introduction, we will use Theorem~\ref{t:BS_on_sphere} to establish Theorem~\ref{t:main}.
We define the following constant: set 
\[
D_{n,p} = \left(\frac{1}{n}\Gamma\left(1+\frac{n}{p}\right)\right)^{-\frac{p}{n}}
\]
and
\begin{equation}
\label{eq_my_sharp}
    D_{n, p, \lambda}\!=\!
    \begin{cases}
    \left(\frac{n(1-\lambda)}{(n+p) \lambda-n}\right)\left(\left(\frac{p\lambda}{(n+p) \lambda-n}\right)^\frac{1}{1-\lambda} \frac{\Gamma\left(\frac{1}{1-\lambda}-\frac{n}{p}\right)}{\Gamma\left(\frac{1}{1-\lambda}\right)}\!\right)^{-\frac{p}{n}}D_{n,p}, &\lambda \in (\frac{n}{n+p},1);
    \\
    \frac{n}{pe}D_{n,p}, & \lambda=1;
    \\
    \left(\frac{n(\lambda-1)}{p \lambda+n(\lambda-1)}\right) \left(\left(\frac{p \lambda}{p \lambda+n(\lambda-1)}\right)^{\frac{1}{1-\lambda}} \frac{\Gamma\left(\frac{\lambda}{\lambda-1}\right)}{\Gamma\left(\frac{\lambda}{\lambda-1}+\frac{n}{p}\right)}\right)^{-\frac{p}{n}}D_{n,p}, & \lambda>1.
    \end{cases}
\end{equation}
We state now an important proposition that relates the volume of the Ball body of a function and a random variable distributed with respect to the probability measure whose density is proportional to said function. It was stated implicitly in the proof of the main theorem in \cite{NVH19} when $\rho(u)=|A^{-t}u|^{-1}$ for some $A\in GL_n(\R)$. The proof in the general situation is roughly the same; we provide a proof, following the ideas of Nguyen, for coherence and completeness.

\begin{proposition}
\label{p:ball_random_relate}
    Fix $p\geq 1$ and $\lambda > \frac{n}{n+p}$. Let $\mathcal{Y}$ be a random vector on $\R^n$ with finite moment of order p and density $g$. Then,
    \begin{equation}
\label{eq:ball_lambda_inequality}
    (n\vol[n](K_{g}(n+p)))^\frac{n+p}{n} \geq D_{n, p, \lambda}N_\lambda (\mathcal{Y})^p.
\end{equation}
There is equality if and only if there exists a function $\rho$ on $\s$, that is $(-1)$-homogeneously extended to $\R^n$, such that for a.e. $y\in\R^n$,
\begin{equation}
g(y)=\beta p_{p,\lambda}\left(\frac{\alpha}{\rho(y)}\right)
        \label{eq:general_vector_density}
    \end{equation}
    where in each case, $\alpha,\beta>0$ are nonnegative constants, chosen in such way so that $g$ integrates to $1$. Furthermore, $\rho_{K_g(n+p)}=c\rho$ almost everywhere for some $c>0$.
\end{proposition}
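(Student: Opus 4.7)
The plan is to reduce the inequality to a one-dimensional moment-entropy problem along each ray through the origin, then recombine the directional estimates by H\"older's inequality. For each $\theta\in\s$, set $\varphi_\theta(r):=g(r\theta)$ and define $A(\theta):=\int_0^\infty \varphi_\theta(r)\,r^{n-1}\,dr$, $B(\theta):=\int_0^\infty \varphi_\theta(r)\,r^{n+p-1}\,dr$, $C(\theta):=\int_0^\infty \varphi_\theta(r)^\lambda\,r^{n-1}\,dr$. Polar coordinates immediately give $\int_{\s} A\,d\theta=1$, $B(\theta)=\rho_{K_g(n+p)}(\theta)^{n+p}$ so that $\int_{\s} B^{n/(n+p)}\,d\theta=n\vol[n](K_g(n+p))$, and $\int_{\s} C\,d\theta=\int_{\R^n} g^\lambda$.

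First I would establish a sharp one-dimensional moment-entropy inequality. Set $\mu:=((n+p)\lambda-n)/p$ and $\nu:=n(\lambda-1)/p$. The claim is that there is a sharp $\kappa_{n,p,\lambda}>0$ for which $C(\theta)$ and $A(\theta)^{\mu}B(\theta)^{-\nu}$ compare up to $\kappa_{n,p,\lambda}$, with the direction determined by the sign of $\lambda-1$ (coming from convexity vs.\ concavity of $x\mapsto x^\lambda$) and with equality iff $\varphi_\theta(r)=\beta_\theta\,p_{p,\lambda}(\alpha_\theta r)$. By Lagrange multipliers, the extremal of $\int \varphi^\lambda r^{n-1}\,dr$ with $A$ and $B$ fixed satisfies $\varphi^{\lambda-1}\propto c_1+c_2 r^p$, whose admissible positive solution is exactly $p_{p,\lambda}$ up to a multiplicative constant and a dilation. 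The sharp $\kappa_{n,p,\lambda}$ is then obtained by plugging the Barenblatt back in and evaluating the three one-dimensional moments via Beta/Gamma identities. The case $\lambda=1$ is handled in the same way, replacing $\int\varphi^\lambda r^{n-1}dr$ by the differential entropy $-\int\varphi\log\varphi\cdot r^{n-1}dr$, whose corresponding extremizer is $\beta_\theta e^{-\alpha_\theta^p r^p/p}$.

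Second I would combine the ray-by-ray estimates by H\"older on $\s$. The identity $\mu-\nu=\lambda$ rearranges to $1/\mu+\nu(n+p)/(\mu n)=1$, so that one has the factorization $A(\theta)=(A(\theta)^\mu B(\theta)^{-\nu})^{1/\mu}\,(B(\theta)^{n/(n+p)})^{\nu(n+p)/(\mu n)}$. Integrating over $\s$ and applying H\"older when $\lambda>1$ (both exponents in $(0,1)$), respectively its reverse when $n/(n+p)<\lambda<1$ (one exponent becomes negative since $\nu<0$), bounds $\int_{\s}A^\mu B^{-\nu}\,d\theta$ in terms of $\int_{\s}A\,d\theta=1$ and $\int_{\s} B^{n/(n+p)}\,d\theta$. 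Chaining with the first step, raising to the power $(n+p)/n$, and rewriting $\int g^\lambda$ as $N_\lambda(\mathcal{Y})^{n(1-\lambda)}$ yields the claimed inequality with constant $D_{n,p,\lambda}$.

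The equality case falls out by tracking equality in each step: Step 1 forces each $\varphi_\theta$ to be of Barenblatt type, and H\"older equality in Step 2 forces $A(\theta)\propto B(\theta)^{n/(n+p)}$, which pins $\beta_\theta$ to be $\theta$-independent and $1/\alpha_\theta$ to be a scalar multiple of some positive function $\rho(\theta)$ on $\s$. Extending $\rho$ $(-1)$-homogeneously to $\R^n$ produces the stated form $g(y)=\beta\,p_{p,\lambda}(\alpha/\rho(y))$, and $\rho_{K_g(n+p)}(\theta)^{n+p}=B(\theta)\propto\rho(\theta)^{n+p}$ gives $\rho_{K_g(n+p)}=c\rho$ a.e. I expect the main obstacle to be the bookkeeping required to identify $\kappa_{n,p,\lambda}$ with $D_{n,p,\lambda}$ across the three regimes in \eqref{eq_my_sharp}, and in particular to reconcile the limit $\lambda\to 1$ with the exponential formula $\frac{n}{pe}D_{n,p}$; all the rest is standard Lagrange multiplier and H\"older machinery.
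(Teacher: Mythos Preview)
Your proposal is correct and follows essentially the same route as the paper: a ray-wise Carlson--Levin-type moment-entropy inequality followed by a H\"older/reverse H\"older estimate on $\s$, with the equality analysis tracking both steps. The only cosmetic differences are that the paper packages the one-dimensional step by applying its Lemma~\ref{l:carlson_levin} (the Carlson--Levin inequality on $\R^n$) to the radially symmetric function $g_u(x)=g(|x|u)$ rather than rederiving it by Lagrange multipliers on the half-line, and that the paper treats $\lambda=1$ via Jensen for $t\mapsto e^{-\frac{p}{n+p}t}$ rather than integrating the pointwise entropy inequality directly; the H\"older factorization on $\s$ is also arranged slightly differently but is algebraically equivalent to yours.
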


We first need the following lemma, the so-called Carlson-Levin inequality (see \cite[Lemma 4.1]{LYZ04_3} and \cite[Lemma 1.2]{NVH19}). This we provide without proof.
\begin{lemma}
    \label{l:carlson_levin}
    Let $p>0$ and $\lambda >\frac{n}{n+p}$. Let $f\in L^1(\R^n)$ have finite $p$th moment. Then, it holds:
    \begin{enumerate}
        \item If $\frac{n}{n+p}<\lambda <1,$ then
    \[
    \left(\frac{D_{n,p,\lambda}^\frac{n}{p}}{n\omega_n}\right)^\frac{1-\lambda}{\lambda} \left(\int_{\R^n}f(x)^\lambda dx \right)^\frac{1}{\lambda}\leq \left(\int_{\R^n}f(x)dx\right)^{1-\frac{n(1-\lambda)}{p\lambda}}\left(\int_{\R^n}|x|^p f(x)dx\right)^\frac{n(1-\lambda)}{p\lambda},
    \]
    and equality holds if and only if 
    \[
    f(x)=a(1+|bx|^p)^{-\frac{1}{1-\lambda}}
    \]
    for some $a,b>0$.
    
    \item If $\lambda >1$, then
    \begin{align*}\left(\frac{D_{n,p,\lambda}^\frac{n}{p}}{n\omega_n}\right)^\frac{p(\lambda-1)}{(n+p)\lambda-n}&\left(\int_{\R^n}f(x)dx\right) 
    \\
    &\leq \left(\int_{\R^n} f(x)^\lambda dx\right)^{\left(\frac{p}{n(\lambda-1)+p\lambda}\right)}\left(\int_{\R^n}|x|^pf(x)dx\right)^{\frac{n(\lambda -1)}{n(\lambda -1)+p\lambda}},\end{align*}
    and equality holds if and only if \[
    f(x) = a(1-|bx|^p)_+^{\frac{1}{\lambda-1}}
    \]
    for some $a,b>0$.

    \item If $\lambda=1$, then 
    \[
    \left(\int_{\R^n}|x|^pf(x)dx\right) \geq  \left(\frac{D_{n,p,1}}{(n\omega_n)^\frac{p}{n}}\right) \text{exp}\left(-\frac{p}{n}\frac{\int_{\R^n} f(x)\log f(x) dx}{\int_{\R^n}f(x)dx}\right)\left(\int_{\R^n}f(x)dx\right)^\frac{n+p}{n},
    \]
    with equality if and only if 
    \[
    f(x)=ae^{-|bx|^p}
    \]
    for some $a,b>0$.
    \end{enumerate}
\end{lemma}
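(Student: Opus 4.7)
The plan is to prove each of the three cases by the same variational strategy: identify the generalized-Gaussian extremizer $f_\ast$ explicitly, exploit the joint invariance of both sides under the two-parameter family of rescalings $f\mapsto c\,s^n f(s\cdot)$ to reduce to densities sharing the $0$th and $p$th moments of $f_\ast$, and then compare $\int f^\lambda$ to $\int f_\ast^\lambda$ (respectively $\int f\log f$ to $\int f_\ast\log f_\ast$ in the Shannon case) via the tangent-line inequality for $t\mapsto t^\lambda$. The expected extremizers are $f_\ast(x)=a(1+|bx|^p)^{-1/(1-\lambda)}$ in case (1), $f_\ast(x)=a(1-|bx|^p)_+^{1/(\lambda-1)}$ in case (2), and $f_\ast(x)=ae^{-|bx|^p}$ in case (3); in each case the two parameters $(a,b)$ can be tuned to match any prescribed positive pair of moments, and the explicit value of the sharp constant $D_{n,p,\lambda}$ is obtained by polar-coordinate reduction to integrals of the form $\int_0^\infty r^{n-1}(1\pm r^p)^{\pm\mu}\,dr$ together with routine gamma-function identities.

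For case (1), concavity of $t\mapsto t^\lambda$ on $[0,\infty)$ gives the pointwise tangent-line bound $f^\lambda\le f_\ast^\lambda+\lambda f_\ast^{\lambda-1}(f-f_\ast)$. The crucial algebraic fact is that $f_\ast^{\lambda-1}=a^{\lambda-1}(1+|bx|^p)$ is \emph{affine} in $|x|^p$; integrating the pointwise bound and using the normalizations $\int f=\int f_\ast$ and $\int|x|^p f=\int|x|^p f_\ast$ annihilates the linear correction, yielding $\int f^\lambda\le\int f_\ast^\lambda$. For case (3), the analogous Gibbs argument uses nonnegativity of the relative entropy $\int f\log(f/f_\ast)\ge 0$ combined with $\log f_\ast=\log a-|bx|^p$ affine in $|x|^p$ and the moment normalization to extract the stated logarithmic inequality. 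In both cases strict concavity (resp.\ the strict convexity of $-\log$) forces $f=f_\ast$ a.e.\ at equality.

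The main obstacle is case (2), $\lambda>1$, since $f_\ast$ now has compact support $\{|bx|\le 1\}$ and $f_\ast^{\lambda-1}=a^{\lambda-1}(1-|bx|^p)_+$ is no longer affine in $|x|^p$. Convexity of $t\mapsto t^\lambda$ still gives $f^\lambda\ge f_\ast^\lambda+\lambda f_\ast^{\lambda-1}(f-f_\ast)$ pointwise (with the convention $0\cdot 0=0$ where $f_\ast=0$), and integrating produces a correction term $\lambda a^{\lambda-1}\bigl[\int(1-|bx|^p)_+ f-\int(1-|bx|^p)_+ f_\ast\bigr]$. The workaround is the decomposition $(1-|bx|^p)_+=(1-|bx|^p)+(|bx|^p-1)_+$ together with the observation that $f_\ast$ vanishes on $\{|bx|>1\}$: after the moment normalization the correction collapses to $\lambda a^{\lambda-1}\int(|bx|^p-1)_+ f\,dx\ge 0$, which only strengthens the inequality, so $\int f^\lambda\ge\int f_\ast^\lambda$ still follows. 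Equality forces $f$ to vanish off $\{|bx|\le 1\}$ and to agree with $f_\ast$ on its support, i.e., $f=f_\ast$ a.e. The sharp constant $D_{n,p,\lambda}$ in each case is finally verified by evaluating both sides at $f=f_\ast$ via the beta-integral computation mentioned above.
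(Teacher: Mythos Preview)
The paper does not prove this lemma; it explicitly states ``This we provide without proof'' and refers to \cite[Lemma 4.1]{LYZ04_3} and \cite[Lemma 1.2]{NVH19}. So there is no paper proof to compare against.

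Your argument is correct and is essentially the standard variational proof of Carlson--Levin type inequalities. A few remarks. The two-parameter scaling $f\mapsto c\,s^n f(s\,\cdot)$ does leave the ratio of the two sides invariant in each case (the check is routine), so normalizing the $0$th and $p$th moments to those of the extremizer is legitimate for any nonzero $f\ge 0$ with finite $p$th moment. In case (1) the key identity $f_\ast^{\lambda-1}=a^{\lambda-1}(1+|bx|^p)$ holds because $-(\lambda-1)/(1-\lambda)=1$, and the integrability constraints $\int f_\ast<\infty$, $\int|x|^pf_\ast<\infty$, $\int f_\ast^\lambda<\infty$ all reduce to $\lambda>\frac{n}{n+p}$, which is the stated hypothesis. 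In case (2) your handling of the support issue is exactly right: the surplus term $\lambda a^{\lambda-1}\int(|bx|^p-1)_+f\,dx\ge 0$ only helps, and at equality it forces $f$ to vanish off $\{|bx|\le 1\}$ while strict convexity forces $f=f_\ast$ on $\{|bx|<1\}$. In case (3) the Gibbs inequality with $\log f_\ast$ affine in $|x|^p$ is the standard route. The identification of the constant by plugging $f=f_\ast$ and reducing to beta/gamma integrals is correct in principle, though you should be aware that verifying it matches the precise form of $D_{n,p,\lambda}$ given in \eqref{eq_my_sharp} involves some bookkeeping.
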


We will need the usual Beta function $B(x,y)$ which we recall as we need a particular formulation of it, i.e. for $x,y>0$,
\begin{equation}
\label{eq:beta_function}
    B(x,y) = \frac{\Gamma(x)\Gamma(y)}{\Gamma(x+y)} = \int_0^1 t^{x-1}(1-t)^{y-1}dt = \int_0^\infty \frac{t^{x-1}}{(1+t)^{x+y}} dt.
\end{equation} 

Finally, we need the reverse H\"older's inequality. If $f$ and $g$ are nonnegative functions on the measure space $(\Omega,\mu)$ and real numbers $q$ and $r$ satisfy $q\in (0,1)$ and $\frac{1}{q}+\frac{1}{r}=1,$ then
\begin{equation}
\label{eq:holder_revserse}
    \int_{\Omega} f(x)g(x)d\mu(x) \geq \left(\int_{\Omega}f(x)^qd\mu(x)\right)^\frac{1}{q}\left(\int_{\Omega}g(x)^rd\mu(x)\right)^\frac{1}{r},
\end{equation}
with equality if and only if $g=c f^{q-1}$ $\mu$ a.e. on $\Omega$ for some $c\geq 0$.
\begin{proof}[Proof of Proposition~\ref{p:ball_random_relate}]
    We first consider the case where $\frac{n}{n+p} <\lambda < 1$. For a fixed $u\in \s$, we apply Lemma~\ref{l:carlson_levin} to the function $g_u=g(|x|u)$ and obtain
    \begin{equation}
    \label{eq:using_lemma_first}
    \left(\frac{D_{n,p,\lambda}^\frac{n}{p}}{n\omega_n}\right)^\frac{1-\lambda}{\lambda} \!\left(\int_{\R^n}g_u(x)^\lambda dx \right)^\frac{1}{\lambda}\!\leq\! \left(\int_{\R^n}g_u(x)dx\right)^{1-\frac{n(1-\lambda)}{p\lambda}}\!\left(\int_{\R^n}|x|^p g_u(x)dx\right)^\frac{n(1-\lambda)}{p\lambda}.
    \end{equation}
    By integrating in polar coordinates, one can readily verify that
    \begin{equation}
    \label{eq:ball_bodies_polar}
        \int_{\R^n}|x|^p g_u(x)dx = n\omega_n \rho_{K_g(n+p)}^{n+p}(u).
    \end{equation}
    Therefore, \eqref{eq:using_lemma_first} becomes
    \begin{equation}
    \label{eq:using_lemma_first_2}
        \left(\frac{D_{n,p,\lambda}^\frac{n}{n+p}}{n\omega_n}\right) \!\left(\!\int_{\R^n}g_u(x)^\lambda dx \!\right)^{\frac{p}{(n+p)(1-\lambda)}}\left(\!\int_{\R^n}g_u(x)dx\!\right)^{-\frac{(n+p)\lambda-n}{(n+p)(1-\lambda)}}\!\leq  \!\rho_{K_g(n+p)}^{n}(u).
    \end{equation}
    We then integrate both sides of \eqref{eq:using_lemma_first_2} over $\s$ and use \eqref{eq:holder_revserse} with $q=\frac{(n+p)(1-\lambda)}{p}\in (0,1)$ and $r=-\frac{(n+p)(1-\lambda)}{(n+p)\lambda-n} < 0$ to obtain
    \begin{equation}
    \label{eq:using_lemma_first_3}
    \begin{split}
        &\left(\frac{D_{n,p,\lambda}^\frac{n}{n+p}}{n\omega_n}\right) \left(\!\int_{\s}\!\int_{\R^n}\!g_u(x)^\lambda\! dx du\! \right)^{\frac{(n+p)(1-\lambda)}{p}}\!\left(\!\int_{\s}\!\int_{\R^n}\!g_u(x)dxdu\right)^{-\frac{(n+p)\lambda-n}{(n+p)(1-\lambda)}}
        \\
        &\leq \left(\frac{D_{n,p,\lambda}^\frac{n}{n+p}}{n\omega_n}\right) \int_{\s}\left(\int_{\R^n}g_u(x)^\lambda dx \right)^{\frac{p}{(n+p)(1-\lambda)}}\left(\int_{\R^n}g_u(x)dx\right)^{-\frac{(n+p)\lambda-n}{(n+p)(1-\lambda)}}du
        \\
        &\leq  (n\vol[n](K_g(n+p)).
        \end{split}
    \end{equation}
    From polar coordinates, we have
    \begin{equation}
    \label{eq:g_u_1}
    \int_{\s}\int_{\R^n}g_u(x)^\lambda dx du= \int_{\s}\int_{\s}\int_{0}^\infty g(|r\theta|u)^\lambda r^{n-1}drd\theta du = n\omega_n \int_{\R^n}g(x)^\lambda dx,
    \end{equation}
    and
    \begin{equation}
    \label{eq:g_u_2}
    \int_{\s}\int_{\R^n}g_u(x) dx du\!=\! \int_{\s}\int_{\s}\int_{0}^\infty g(|r\theta|u) r^{n-1}drd\theta du \!=\! n\omega_n \int_{\R^n}g(x) dx = n\omega_n.
    \end{equation}
    Inserting \eqref{eq:g_u_1} and \eqref{eq:g_u_2} into \eqref{eq:using_lemma_first_3} yields
    \begin{equation}
    \label{eq:using_lemma_first_4}
    \begin{split}
        D_{n,p,\lambda}^\frac{n}{n+p} \left(\int_{\R^n}g(x)^\lambda dx \right)^{\frac{p}{(n+p)(1-\lambda)}}
        \leq  (n\vol[n](K_g(n+p)).
        \end{split}
    \end{equation}
    Raising both sides of \eqref{eq:using_lemma_first_4} to the $\frac{n+p}{n}$ power and inserting the definition of $N_\lambda (\mathcal{Y})$ yields the claimed inequality.

    Suppose there is equality. Then, from our use of Lemma~\ref{l:carlson_levin}, we must have that, if we fix $u\in \s$, then, for a.e. $x\in\R^n$, it holds \begin{equation}
\label{eq:ball_bodies_radial_eq}
g(|x|u)=a(u)\left(1+|b(u) x|^p\right)^{-\frac{1}{1-\lambda}},\end{equation}
for some nonnegative functions $a$ and $b$. Also, from equality in the reverse H\"older's inequality \eqref{eq:holder_revserse}, we have for a.e. $u\in \s$ the existence of a positive constant $\gamma$ such that
\begin{equation}
\label{eq:ball_eq_hoelder}
\int_{\R^n}\! g(|x|u)^\lambda dx=\gamma \int_{\R^n} g(|x|u) dx.
\end{equation}
Inserting \eqref{eq:ball_bodies_radial_eq} into \eqref{eq:ball_eq_hoelder}, we obtain using \eqref{eq:beta_function} that for almost all $u\in\s$ it holds
\[
a(u) := \alpha=\left(\frac{\lambda p -n(1-\lambda)}{\lambda p}\gamma\right)^\frac{1}{\lambda -1}.
\]
Let $\rho$ be any constant multiple of $\rho_{K_g(n+p)}$; without loss of generality, \begin{equation}
\rho \equiv \rho_{K_g(n+p)}.
\label{eq:psi_def}
\end{equation} Then, from \eqref{eq:ball_bodies_radial_eq} and \eqref{eq:ball_bodies_polar}, we have for a.e. $u\in\s$
\begin{equation}
    \frac{p\lambda-n(1-\lambda)}{n(1-\lambda)}\rho(u)^{n+p} = \frac{\alpha b(u)^{-(n+p)}}{p}B\left(\frac{n}{p},\frac{1}{1-\lambda}-\frac{n}{p}\right),
\end{equation}
by using \eqref{eq:beta_function}. These two equations together imply
\begin{align*}
 b(u) &= \left(\left(\frac{\lambda p -n(1-\lambda)}{\lambda p}\gamma\right)^\frac{1}{\lambda -1}B\left(\frac{n}{p},\frac{1}{1-\lambda}-\frac{n}{p}\right)\frac{n}{p}\frac{(1-\lambda)}{p\lambda-n(1-\lambda)}\right)^\frac{1}{n+p}\rho(u)^{-1}
 \\
 & = \left(\frac{\beta (1-\lambda)}{p}\right)^\frac{1}{p}\rho(u)^{-1},
\end{align*}
where $\beta$ is defined implicitly so equality holds. Putting the pieces together, we have it holds for a.e. $u\in\s$
\[
g(|x|u)=\alpha\left(1+\frac{\beta (1-\lambda)}{p}\left(\frac{|x|}{\rho(u)}\right)^p\right)^{-\frac{1}{1-\lambda}}.
\]
By identifying $u=x/|x|$ and using the $(-1)$-homogeneity of $\rho$, the claimed formula for equality follows in this case.

The case when $\lambda >1$, for both the inequality and the equality condition, is essentially the same. Simply note that in the use of the reverse H\"older's inequality \eqref{eq:holder_revserse} one takes $q=\frac{(n+p)(\lambda-1)}{(n+p)\lambda-n} \in (0,1)$ and $r=-\frac{(n+p)(\lambda-1)}{p}<0$.

Finally, for $\lambda =1,$ we again apply Lemma~\ref{l:carlson_levin} to $g_u(x)=g(|x|u)$ and obtain
    \[\rho_{K_g(n+p)}^{n}(u) \geq  \left(\frac{D_{n,p,1}^\frac{n}{n+p}}{n\omega_n}\right)\text{exp}\left(-\frac{p}{n+p}\frac{\int_{\R^n} g_u(x)\log g_u(x) dx}{\int_{\R^n}g_u(x)dx}\right)\left(\int_{\R^n}g_u(x)dx\right),
    \]
    where we have already applied \eqref{eq:ball_bodies_polar}. Integrating both sides over $\s$ yields
    \begin{align*}&(n\vol[n](K_g(n+p)) 
    \\
    &\geq \left(\frac{D_{n,p,1}^\frac{n}{n+p}}{n\omega_n}\right)\int_{\s} \text{exp}\left(-\frac{p}{n+p}\frac{\int_{\R^n} g_u(x)\log g_u(x) dx}{\int_{\R^n}g_u(x)dx}\right)\left(\int_{\R^n}g_u(x)dx\right)du.
    \end{align*}
    Using Jensen's inequality for the convex function $t\mapsto e^{-\frac{p}{n+p}t}$, we obtain
    \begin{align*}&(n\vol[n](K_g(n+p)) 
    \\
    &\geq D_{n,p,1}^\frac{n}{n+p}\int_{\s} \text{exp}\left(-\frac{p}{n+p}\frac{\int_{\R^n} g_u(x)\log g_u(x) dx}{\int_{\R^n}g_u(x)dx}\right)\left(\int_{\R^n}g_u(x)dx\right)\frac{du}{n\omega_n}
    \\
    &\geq 
    D_{n,p,1}^\frac{n}{n+p} \text{exp}\left(-\frac{p}{n+p}\int_{\s}\int_{\R^n} g_u(x)\log g_u(x) dx\frac{du}{n\omega_n}\right)
    \\
    & =
    D_{n,p,1}^\frac{n}{n+p} \text{exp}\left(-\frac{p}{n+p}\int_{\R^n} g(x)\log g(x) dx\right).
    \end{align*}
Inserting the definition of $N_1(\mathcal{Y})$ and raising both sides to the $\frac{n+p}{n}$ power yields the inequality.

For the equality conditions, equality in our use of Lemma~\ref{l:carlson_levin} yields if we fix $u\in \s$, then for a.e. $x\in \R^n$ we have: 
\begin{equation}
\label{eq:ball_bodies_radial_eq_2}
g(|x|u)=a(u)e^{-|b(u)x|^p},\end{equation}
and, from equality in the use of Jensen's inequality, we have for a.e. $u\in \s$ the existence of a positive constant $\gamma$ such that
\begin{equation}
\label{eq:ball_eq_jensen}
\int_{\R^n} g(|x|u) \log g(|x|u)  dx=\gamma \int_{\R^n}  g(|x|u) dx.
\end{equation}
By inserting \eqref{eq:ball_bodies_radial_eq_2} into \eqref{eq:ball_eq_jensen}, we obtain using $\rho$ from \eqref{eq:psi_def} a formula for $a(u)$ a.e.:
\[
a(u) = e^{\gamma}e^{\int_{\R^n} |x|^p\frac{e^{-|x|^p}dx}{\int_{\R^n}e^{-|x|^p}dx}},
\]
which is just a constant, say $\alpha$.
Then, inserting \eqref{eq:ball_bodies_radial_eq_2} and the above formula for $a(u)$ into \eqref{eq:ball_bodies_polar}, we obtain that
\begin{align*}
b(u) = \left(\Gamma\left(1+\frac{n}{p}\right) \frac{\alpha}{p} \right)^\frac{1}{n+p}\rho(u)^{-1}=\left(\frac{\beta}{p}\right)^\frac{1}{p}\rho(u)^{-1}, \,\text{where}\quad \beta = p\left(\Gamma\left(1+\frac{n}{p}\right) \frac{\alpha}{p} \right)^\frac{p}{n+p}.
\end{align*}
We then have that
\[
g(|x|u)=\alpha e^{-\frac{\beta}{p}\left(\frac{|x|}{\rho(u)}\right)^p}.
\]
Again identifying $u=x/|x|$ and using the $(-1)$-homogeneity of $\rho$ yields the claimed formula for the equality case.
\end{proof}

With the necessary preparation completed, we can now prove Theorem~\ref{t:main}.

\begin{proof}[Proof of Theorem~\ref{t:main}]
Let $\mathfrak{X}$ and $\mathcal{Y}$ be two random vectors, one on $M_{n,m}(\R)$ and one on $\R^n$, respectively, each with finite moment of order $p$ and densities $f$ and $g$, respectively. Consider the $(nm+p)$th Ball body of $f$ and the $(n+p)$th Ball body of $g$. Then, from \eqref{eq:ball_body}, one has that
\[
\int_{\mathbb{S}^{nm-1}}\rho_{K_{f}(nm+p)}(U)^{nm+p}dU = \int_{M_{n,m}(\R)}|A|^pf(A)dA < \infty
\]
and
\[
\int_{\s}\rho_{K_{g}(n+p)}(u)^{n+p}du = \int_{\R^n}|x|^pg(x)dx < \infty.
\]

\noindent Additionally, one has from polar coordinates that $\mathbb{E}[h_Q(\mathfrak{X}^t\mathcal{Y})^p]$ given by \eqref{eq:expectation} satisfies the inequality
\begin{align*}
    \mathbb{E}[h_Q(\mathfrak{X}^t\mathcal{Y})^p]&= \int_{\s}\int_{\mathbb{S}^{nm-1}}h_Q(U^tu)^p \rho_{K_{f}(nm+p)}(U)^{nm+p} \rho_{K_{g}(n+p)}(u)^{n+p} dU du
    \\
    &\geq \frac{\|\rho_{K_{f}(nm+p)}^{nm+p}\|_{L^\frac{nm}{nm+p}\left(\mathbb{S}^{nm-1}\right)} \|\rho_{K_{g}(n+p)}^{n+p}\|_{L^\frac{n}{n+p}\left(\s\right)}} {(n\vol(\B))^{\frac{n+p}{n}}\left(nm\Vol(\PP \B )\right)^{\frac{p}{nm}}},
\end{align*} 
where we used Theorem~\ref{t:BS_on_sphere} in the last line. Next, observe that
\begin{align*}
    \|\rho_{K_{f}(nm+p)}^{nm+p}\|_{L^\frac{nm}{nm+p}\left(\mathbb{S}^{nm-1}\right)} &= \left(\int_{\mathbb{S}^{nm-1}} \rho_{K_{f}(nm+p)}(U)^{nm}dU\right)^\frac{nm+p}{nm}
    \\
    &=(nm\vol[nm](K_{f}(nm+p)))^\frac{nm+p}{nm},
\end{align*}
and, similarly,
\begin{align*}
    \|\rho_{K_{g}(n+p)}^{n+p}\|_{L^\frac{n}{n+p}\left(\s\right)} &= \left(\int_{\s} \rho_{K_{g}(n+p)}(u)^{n}du\right)^\frac{n+p}{n}
    =(n\vol[n](K_{g}(n+p)))^\frac{n+p}{n}.
\end{align*}
Inserting this, we have
\begin{align*}
    \mathbb{E}[h_Q(\mathfrak{X}^t\mathcal{Y})^p] &\geq  {(n\vol(\B))^{-\frac{n+p}{n}}\left(nm\Vol(\PP \B )\right)^{-\frac{p}{nm}}}
    \\
    &\quad\quad\times(nm\vol[nm](K_{f}(nm+p)))^\frac{nm+p}{nm}(n\vol[n](K_{g}(n+p)))^\frac{n+p}{n}.
\end{align*} 
We then complete the proof of the inequality by invoking Proposition~\ref{p:ball_random_relate}. We note that, when applying the proposition to the function $f$ on $M_{n,m}(\R)$, we must take $\lambda >\frac{nm}{nm+p}$.

As for the equality conditions, we must have equality in our use of Theorem~\ref{t:BS_on_sphere}. Thus, we have 
\begin{equation}\label{eq:ball_bodies_eq}
\rho_{K_{g}(n+p)}(u) = c_1|A^{t}u|^{-1} \text{ and } \rho_{K_{f}(nm+p)}(U) = c_2\|A^{-1}U\|_{\PP \B}^{-1}.\end{equation}
for some $c_1,c_2>0$ and $A\in GL_n(\R)$. Then, the claim follows from the equality conditions of Proposition~\ref{p:ball_random_relate}.
\end{proof}

It was shown \cite{HLPRY23_2} that $\Pi^\circ_{Q,\infty} K$ exists and that as $p\to\infty,$ $\PP K \to \Pi^\circ_{Q,\infty} K$ continuously in any meaningful sense of the word (e.g. in the Hausdorff metric, or the point-wise convergence of their gauges). Recall the definition of $\lambda$-R\'enyi from \eqref{eq:renyi}. If $L\subset M_{n,m}(\R)$ and $K\subset \R^n$ are compact sets, $\mathfrak{X}$ is the random variable with density $\chi_L/\vol[nm](L)$, and $\mathcal{Y}$ is the random variable with density $\chi_K/\vol[n](K)$, then $N_\infty(\mathfrak{X}) = \vol[nm](L)^\frac{1}{nm}$ and $N_{\infty}(\mathcal{Y})=\vol[n](K)^\frac{1}{n}$. One can verify that
$$\lim_{p\to\infty}\lim_{\lambda\to\infty} D_{n,p,\lambda}^\frac{n}{p} = \lim_{p\to\infty} D_{n,p}^{\frac{n}{p}} = n.$$

Therefore, by sending $p\to\infty$ and $\lambda\to\infty$ in Theorem~\ref{t:main}, we obtain the following corollary, which extends on the $m=1$ case when $Q$ is a symmetric interval by Lutwak, Yang and Zhang \cite{LYZ04_3}.

\begin{corollary}
Let $L\subset M_{n,m}(\R)$ and $K\subset \R^n$ be compact sets and $Q\in \conbodo[m]$ be such that either $L$ or $Q$ is origin-symmetric. Then, 
\[
(\vol[n](\B))^{m}(\vol[nm](\Pi^\circ_{Q,\infty}\B))\max_{A\in L, v\in K}h_Q(A^tv)^{nm} \geq \vol[nm](L)\vol[n](K)^m.
\]
\end{corollary}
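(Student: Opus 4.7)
The plan is to apply Theorem~\ref{t:main} to the two uniform densities $f_{\mathfrak{X}} = \chi_L/\vol[nm](L)$ and $g_{\mathcal{Y}} = \chi_K/\vol[n](K)$, raise the resulting inequality to the power $1/p$, and pass to the limit as $\lambda\to\infty$ and then $p\to\infty$ as already indicated in the discussion preceding the corollary. Without loss of generality both $L$ and $K$ have positive Lebesgue measure, since otherwise both sides vanish. The hypotheses of Theorem~\ref{t:main} are met: $L$ or $Q$ being origin-symmetric amounts to $f_{\mathfrak{X}}$ being even or $Q$ being symmetric; compactness provides finite $p$th moments; and the constraint $\lambda\geq nm/(nm+p)$ is automatic once $\lambda$ is large.

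First I would rewrite the expectation and compute the R\'enyi entropies. Direct substitution gives
\[
\mathbb{E}[h_Q(\mathfrak{X}^t\mathcal{Y})^p] = \frac{1}{\vol[nm](L)\vol[n](K)}\int_K\int_L h_Q(A^tv)^p\,dA\,dv,
\]
and since $\int f_{\mathfrak{X}}^\lambda\,dA = \vol[nm](L)^{1-\lambda}$ for every $\lambda\neq 1$, the R\'enyi entropy power satisfies $N_\lambda(\mathfrak{X}) = \vol[nm](L)^{1/(nm)}$ for every admissible $\lambda$, with the analogous identity for $\mathcal{Y}$. In particular both $N_\lambda$ values and the left-hand side above are independent of $\lambda$, so the entire $\lambda$-dependence on the right of Theorem~\ref{t:main} lives in the constants $D_{n,p,\lambda}$ and $D_{nm,p,\lambda}$, which may therefore be optimized over $\lambda$ without affecting the inequality.

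Next I would take the $1/p$th root. On the left, because $L\times K$ is compact with positive volume and $h_Q$ is continuous, the $L^p$-mean with respect to normalized Lebesgue measure on $L\times K$ converges to $\max_{A\in L,\,v\in K} h_Q(A^tv)$ as $p\to\infty$. On the right, I would first let $\lambda\to\infty$ and then $p\to\infty$, using the limit $\lim_{p\to\infty}\lim_{\lambda\to\infty} D_{d,p,\lambda}^{d/p} = d$ for $d\in\{n,nm\}$ recorded just after Theorem~\ref{t:main}, together with the continuous convergence $\PP\B \to \Pi^\circ_{Q,\infty}\B$ recalled there. Combining the algebraic identities $n^{1/n}(n\omega_n)^{-1/n} = \vol[n](\B)^{-1/n}$ and $(nm)^{1/(nm)}(nm\Vol(\Pi^\circ_{Q,\infty}\B))^{-1/(nm)} = \Vol(\Pi^\circ_{Q,\infty}\B)^{-1/(nm)}$, the limiting inequality reduces to
\[
\max_{A\in L,\,v\in K} h_Q(A^tv)\;\geq\;\frac{\vol[nm](L)^{1/(nm)}\,\vol[n](K)^{1/n}}{\vol[n](\B)^{1/n}\,\Vol(\Pi^\circ_{Q,\infty}\B)^{1/(nm)}}.
\]
Raising both sides to the $nm$th power and clearing denominators produces the claimed inequality.

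This is a clean limit argument with no real obstacle; the only bookkeeping point requiring care is that the iterated limit in the constants must be taken as $\lambda\to\infty$ before $p\to\infty$, which is legitimate here because the left-hand side of Theorem~\ref{t:main} is $\lambda$-free under the present choice of uniform densities and so $\lambda$ may be sent to infinity first for each fixed $p$.
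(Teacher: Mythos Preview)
Your argument is correct and follows exactly the approach the paper outlines just before the corollary: apply Theorem~\ref{t:main} to the uniform densities on $L$ and $K$, use that $N_\lambda$ is constant in $\lambda$ for such densities so that one may first send $\lambda\to\infty$ and then $p\to\infty$, and invoke the recorded limits for $D_{d,p,\lambda}^{d/p}$ together with $\PP\B\to\Pi^\circ_{Q,\infty}\B$. The only minor slip is the phrase ``both sides vanish'' in the zero-measure case---in fact only the right-hand side vanishes, but that is all you need, so the reduction is still valid.
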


\section{Other related inequalities}
\label{sec:other_results}
While not strictly necessary, we take a moment to show that $\vol[n](K_g(n+p))$ is finite when the random variable with density $g$ has finite moment of order $p$. Indeed, this follows from Jensen's inequality:
\begin{equation}
\begin{split}
\label{eq:jensen_ball}
    (n\vol[n](K_g(n+p)))^\frac{n+p}{n} &= (n\omega_n)^\frac{n+p}{n}\left(\int_{\s}\rho_{K_g(n+p)}(u)^n \frac{du}{n\omega_n}\right)^\frac{n+p}{n} 
    \\
    &\leq (n\omega_n)^\frac{n+p}{n}\int_{\s}\rho_{K_g(n+p)}(u)^{n+p} \frac{du}{n\omega_n}
    \\
    & = (n\omega_n)^\frac{p}{n}\int_{\R^n}|y|^p g(y) dy.\
\end{split}
\end{equation}
Note that there is equality if and only if $\rho_{K_g(n+p)}$ is constant on $\s$ almost everywhere, i.e. $K_g(n+p)$ is a Euclidean ball, up to sets of measure zero. This occurs when $g(y)=q(|y|)$ for some nonnegative function $q$ on $\R$. The random variable $\mathcal{Y}$ with density $g$ is said to be \textit{spherically contoured.} The expected $p$th moment of a random variable $\mathcal{Y}$ with density $g$ is precisely
\[
\mathbb{E}(|\mathcal{Y}|^p) = \int_{\R^n}|y|^pg(y)dy,
\]
and so combining \eqref{eq:jensen_ball} with Proposition~\ref{p:ball_random_relate} yields the following inequality: fix $p\geq 1$ and $\lambda > \frac{n}{n+p}$. Let $\mathcal{Z}$ be the standard generalized Gaussian random vector associated with $(p,\lambda)$. Then, if $\mathcal{Y}$ is a random vector with finite moment of order $p$, one has
\begin{equation}
\label{eq:iso_vec}
    \frac{\mathbb{E}(|\mathcal{Y}|^p)}{\mathbb{E}(|\mathcal{Z}|^p)} \geq \left(\frac{N_\lambda (\mathcal{Y})}{N_\lambda (\mathcal{Z})}\right)^p,
\end{equation}
with equality when $\mathcal{Y}=t\mathcal{Z}$. This had been established previously in \cite{LYZ04_3,LYZ05_2,LYZ07}. 

We conclude by establishing some other moment-entropy-type inequalities. As we will see, these inequalities do not occur in the space of matrices, but, instead, are induced on $\R^n$ by the structure on $M_{n,m}(\R)$ and an origin-symmetric convex body $Q\in\conbodo[m]$. We must first recall the previously established situations for when $Q=[-1,1]$. In \cite{LLYZ13}, an affine invariant version of \eqref{eq:iso_vec} was established. If $p>0$ and $\mathcal{Y}$ is a random vector on $\R^n$ with finite $p$th moment, its affine $p$th moment is precisely
$$M_p(\mathcal{Y}) = (n\omega_n)^{\frac{n+p}{n}}\left(\left(\frac{2\omega_{n-2+p}}{\omega_{p-1}}\right)^{\frac{n}{p}} \int_{\s}\mathbb{E}_\mathcal{Y}(|\mathcal{Y}\cdot u|^p)^{-\frac{n}{p}}\frac{du}{n\omega_n}\right)^{-\frac{p}{n}}.$$
It then holds from H\"older's inequality that
$M_p(\mathcal{Y}) \leq \mathbb{E}(|\mathcal{Y}|^p)$ with equality when $\mathcal{Y}$ is spherically contoured and it was also shown that
\begin{equation}
\label{eq:affine_iso_vec}
    \frac{M_p(\mathcal{Y})}{N_\lambda (\mathcal{Y})^p}\geq D_{n,p,\lambda},
\end{equation}
with equality obtained only when $\mathcal{Y}$ is a generalized Gaussian random vector.

Let $Q\in\conbodo[m]$ and $p>0$. For a random vector $\mathcal{Y}$ on $\R^n$, we define its affine $(L^p,Q)$ moment as
\begin{equation}
    M_{Q,p}(\mathcal{Y}) =(n\omega_n)^{\frac{n+p}{n}} \left(\frac{\omega_{nm}}{\Vol(\!\PP \B\! )}\int_{\mathbb{S}^{nm-1}}\mathbb{E}_{\mathcal{Y}}(h_Q(U^t\mathcal{Y})^p)^{-\frac{nm}{p}}\frac{dU}{nm\omega_{nm}}\right)^{-\frac{p}{nm}}.
\end{equation}
\begin{theorem}
    Fix $m,n\in\N$ and $p\geq 1$. Suppose $Q\in\conbodio[m]$ is origin-symmetric and $\lambda \geq \frac{nm}{nm+p}$. Let $\mathcal{Y}$ be a random vector with finite $p$th moment. Then, it holds
\[
\frac{M_{Q,p}(\mathcal{Y})}{N_\lambda (\mathcal{Y})^p} \geq D_{n,p,\lambda} .
\]
\end{theorem}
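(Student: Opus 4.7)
The plan is to follow the Ball-body route used in the proof of Theorem~\ref{t:main}, but with Theorem~\ref{t:BS_on_sphere} applied to test functions tailored to extract exactly the $L^{-nm/p}$ average that appears in $M_{Q,p}(\mathcal{Y})$. Let $g_\mathcal{Y}$ denote the density of $\mathcal{Y}$ and set $L := K_{g_\mathcal{Y}}(n+p) \subset \R^n$; finiteness of the $p$th moment together with \eqref{eq:jensen_ball} gives $0 < \vol[n](L) < \infty$. A polar change of variables combined with \eqref{eq:ball_body} yields, for every $U \in M_{n,m}(\R)$,
\[
\mathbb{E}(h_Q(U^t\mathcal{Y})^p) = \int_{\s} h_Q(U^t\theta)^p \rho_L(\theta)^{n+p} d\theta = (n+p)\, I(U), \quad I(U) := \int_L h_Q(U^tz)^p dz,
\]
so that, setting $J := \int_{\mathbb{S}^{nm-1}} I(U)^{-nm/p}dU$, the definition of $M_{Q,p}$ rearranges cleanly to
\[
M_{Q,p}(\mathcal{Y}) = (n\omega_n)^{(n+p)/n}(n+p)\bigl(nm\Vol(\PP \B)/J\bigr)^{p/nm}.
\]

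The core step is to bound $J$ from above by applying Theorem~\ref{t:BS_on_sphere} with the test functions $g(v) := \rho_L(v)^{n+p}$ on $\s$ and $f(U) := I(U)^{-(nm+p)/p}$ on $\mathbb{S}^{nm-1}$. The first choice turns $\|g\|_{L^{n/(n+p)}(\s)}$ into $(n\vol[n](L))^{(n+p)/n}$, and the second is engineered so that the double integral on the left rebuilds $(n+p)J$ while simultaneously $\|f\|_{L^{nm/(nm+p)}(\mathbb{S}^{nm-1})}$ equals $J^{(nm+p)/nm}$; origin-symmetry of $Q$ ensures that $I$, and hence $f$, is even, meeting the hypothesis of Theorem~\ref{t:BS_on_sphere}. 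Solving the resulting inequality for $J^{-p/nm}$ and substituting back causes the constants involving $n+p$, $n\omega_n$, and $nm\Vol(\PP\B)$ to cancel, leaving the clean reduction
\[
M_{Q,p}(\mathcal{Y}) \geq \bigl(n\vol[n](K_{g_\mathcal{Y}}(n+p))\bigr)^{(n+p)/n},
\]
which is an affine strengthening of the moment bound \eqref{eq:jensen_ball}. Proposition~\ref{p:ball_random_relate} then converts the right-hand side into $D_{n,p,\lambda}N_\lambda(\mathcal{Y})^p$, finishing the proof. Tracing equalities backwards, Theorem~\ref{t:BS_on_sphere} forces $L$ to be a linear image $rA^{-t}\B$ of the ball, after which Proposition~\ref{p:ball_random_relate} pins down $\mathcal{Y}$ as a generalized Gaussian $\alpha A^{-t}\mathcal{Z}$.

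The only non-obvious element is the choice of $f$: the exponent $-(nm+p)/p$ is not guessable a priori, but it is forced on us by the simultaneous requirements that $\int I\,f$ on the left of Theorem~\ref{t:BS_on_sphere} reproduce $J$ and that $\|f\|_{L^{nm/(nm+p)}}$ on the right equal $J^{(nm+p)/nm}$, so that the resulting estimate is algebraically solvable for $J$.
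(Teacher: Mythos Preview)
Your argument is correct, and it takes a genuinely different route from the paper's.

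The paper proves the inequality as a corollary of Theorem~\ref{t:main}: it invokes \cite[Lemma~4.1]{LYZ04_3} (a dual Minkowski inequality for random vectors) to produce, for each $\mathcal{Y}$, an extremal random matrix $\mathfrak{X}$ with prescribed $N_\lambda(\mathfrak{X})=c$ for which $\mathbb{E}[h_Q(\mathfrak{X}^t\mathcal{Y})^p]=M_{Q,p}(\mathcal{Y})$, and then applies Theorem~\ref{t:main} to this pair. Your proof instead bypasses both Theorem~\ref{t:main} and the external lemma from \cite{LYZ04_3}, going back to the underlying ingredients Theorem~\ref{t:BS_on_sphere} and Proposition~\ref{p:ball_random_relate} directly. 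The choice $f(U)=I(U)^{-(nm+p)/p}$ plays the same role as the paper's extremal $\mathfrak{X}$, but at the level of the sphere rather than the full density; since $I$ is continuous and strictly positive on $\mathbb{S}^{nm-1}$ (as $Q\in\conbodio[m]$ and $\vol[n](L)>0$), the hypothesis $f\in L^1$ is satisfied. A pleasant by-product of your route is the $\lambda$-free intermediate inequality $M_{Q,p}(\mathcal{Y})\geq \bigl(n\vol[n](K_{g_\mathcal{Y}}(n+p))\bigr)^{(n+p)/n}$, an affine sharpening of \eqref{eq:jensen_ball} that the paper's argument does not isolate. The paper's approach, on the other hand, makes transparent why the constant $D_{n,p,\lambda}$ appears alone: the $D_{nm,p,\lambda}$ contribution in Theorem~\ref{t:main} is exactly cancelled by the normalization of the optimizing $\mathfrak{X}$.
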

\begin{proof}
In \cite[Lemma 4.1]{LYZ04_3}, the following extension of the dual Minkowski's first inequality \eqref{dual_Min_first} to random vectors was established; as a special case one obtains, if $p>0$, $\lambda > \frac{nm}{nm+p}$ and $\mathfrak{X}$ and $\mathcal{Y}$ are random vectors on $M_{n,m}(\R)$ and $\R^n$ respectively with finite $p$th moment, then
\begin{equation}
\label{eq:min_first_random_vec}
    \int_{M_{n,m}(\R)}\!\!\!\!\!\!\!\mathbb{E}_{\mathcal{Y}}(h_Q(\mathfrak{X}^t\mathcal{Y})^p)f_{\mathfrak{X}}(\mathfrak{X})d\mathfrak{X} \!\geq\! N_\lambda(\mathfrak{X})^pD_{nm,p,\lambda} \!\left(\!\int_{\mathbb{S}^{nm-1}}\!\!\!\mathbb{E}_{\mathcal{Y}}(h_Q(U^t\mathcal{Y})^p)^{-\frac{nm}{p}}dU\!\right)^{-\frac{p}{nm}},
\end{equation}
with equality if and only if there exists $a,b>0$ such that \begin{equation}
\label{eq:psi_for_y}
f_{\mathfrak{X}}(A)=bp_{p,\lambda}\left(a\mathbb{E}_{\mathcal{Y}}(h_Q(A^t\mathcal{Y})^p)^{\frac{1}{p}}\right).\end{equation} Define
\[
c=D_{nm,p,\lambda}^{-\frac{1}{p}}\left((n\omega_n)^{\frac{n+p}{p}}\left(nm\Vol(\!\PP \B\! )\right)^\frac{1}{m}\right)^\frac{1}{n}.
\]
By performing Fubini's theorem on \eqref{eq:min_first_random_vec}, we obtain 
\begin{equation}
\label{eq:min_first_random_vec_2}
    \int_{\R^n}\mathbb{E}_{\mathfrak{X}}(h_Q(\mathfrak{X}^t\mathcal{Y})^p)g(\mathcal{Y})d\mathcal{Y} \geq M_{Q,p}(\mathcal{Y})
\end{equation}
whenever $N_\lambda (\mathfrak{X})=c$. Still using the choice of $f$ from \eqref{eq:psi_for_y} and normalizing $\mathfrak{X}$ so that $N_\lambda (\mathfrak{X})=c$ holds, we have equality in \eqref{eq:min_first_random_vec_2}. But the left-hand side of \eqref{eq:min_first_random_vec_2} is precisely $\mathbb{E}(h_Q(\mathfrak{X}^t\mathcal{Y})^p)$. Therefore, we obtain the claim from Theorem~\ref{t:main}.
\end{proof}

Another important inequality is the Fisher information inequality: for a random vector $\mathcal{Y}$ on $\R^n$ with finite first moment, it holds
$$N_1(\mathcal{Y})\Phi(\mathcal{Y}) \geq 2\pi n e,$$
with equality if and only if $\mathcal{Y}$ is a standard Gaussian random vector. Here, $\Phi(\mathcal{Y})$ is the Fisher information of $\mathcal{Y}$, which is given by 
\begin{equation}
    \Phi(\mathcal{Y}) = \int_{\R^n}g_{\mathcal{Y}}^{-1}(y)|\nabla g_{\mathcal{Y}}(y)|dy,
    \label{eq:fisher}
\end{equation}
where $g_{\mathcal{Y}}$ is the density of $\mathcal{Y}$. In \cite{LLYZ12}, Lutwak, Lv, Yang and Zhang generalized the Fisher information inequality to the parameters $(p,\lambda)$. For a random vector $\mathcal{Y}$ with density $g_{\mathcal{Y}}$, its $\lambda$-score is given by $$\mathcal{Y}_\lambda = g_{\mathcal{Y}}^{\lambda -2}(\mathcal{Y})\nabla g_{\mathcal{Y}}(\mathcal{Y}),$$ and its $(p,\lambda)$-Fisher information is given by
$$\Phi_{p,\lambda}(\mathcal{Y}) = \mathbb{E}_{\mathcal{Y}}(|\mathcal{Y}_\lambda|^p).$$ The classical case is when $p=2,\lambda =1$. The quantity $\Phi_{p,\lambda}(\mathcal{Y})$ is not affine invariant. They also defined the affine $(p,\lambda)$-Fisher information as
$$\Psi_{p,\lambda}(\mathcal{Y}) = M_p(\mathcal{Y}_\lambda).$$ The main result of Lutwak, Lv, Yang and Zhang is the following affine $(p,\lambda)$-Fisher information inequality \cite[Theorem 8.2]{LLYZ12}: suppose $\mathcal{Y}$ is a random vector, $1\leq p <n$ and $\lambda \geq \frac{n-1}{n}$. Then, it holds
\begin{equation}
\label{eq:affine_p_lambda_fisher}
    \Psi_{p,\lambda}(\mathcal{Y}) N_\lambda (\mathcal{Y})^{p((\lambda-1) n+1)} \geq \Phi_{p, \lambda}(\mathcal{Z}) N_\lambda (\mathcal{Z})^{p((\lambda-1) n+1)},
\end{equation}
with equality if and only if $\mathcal{Y}$ is a generalized Gaussian associated with $(p,\lambda)$. Since $\Psi_{p,\lambda}(\mathcal{Y}) \leq \Phi_{p,\lambda}(\mathcal{Y})$, this implies the inequality
\begin{equation}
\label{eq:p_lambda_fisher}
    \Phi_{p, \lambda}(\mathcal{Y}) N_\lambda (\mathcal{Y})^{p((\lambda-1) n+1)} \geq \Phi_{p, \lambda}(\mathcal{Z}) N_\lambda (\mathcal{Z})^{p((\lambda-1) n+1)},
\end{equation}
with equality if and only if $\mathcal{Y}$ is a standard generalized Gaussian associated with $(p,\lambda)$.

Following their path, we define for a random vector $\mathcal{Y}$ on $\R^n$ its affine $(Q,p,\lambda)$-Fisher information
$$\Psi_{Q,p,\lambda}(\mathcal{Y})=M_{Q,p}(\mathcal{Y}_\lambda).$$
Recall the definition of the $LYZ$ body of a weakly differentiable function $w$ from \eqref{eq:LYZw}. By defining the function $w=g_\mathcal{Y}^{\lambda-1+\frac{1}{p}}$, we deduce that
\begin{equation}
    \Psi_{Q,p,\lambda}(\mathcal{Y}) = \left(\lambda -1 + \frac{1}{p}\right)^{-p} \vol[nm]\left(\PP \langle w\rangle_p\right)^{-\frac{p}{nm}}.
\end{equation}
Following the idea of the proof of \cite[Theorem 8.2]{LLYZ12}, set $r=\frac{1}{\lambda-1+\frac{1}{p}}$, and define the parameters $q$ and $s$ via \eqref{eq:q_param} and \eqref{eq:s_param}. Then, one has the identities
\begin{equation}
    \|w\|_{L^q(\R^n)}^{1-s} = N_\lambda (\mathcal{Y})^{-((\lambda-1)n+1)} \, \text{and} \, \|w\|_{L^r(\R^n)}^{s}=1.
\end{equation}
The following theorem then follows from \eqref{eq:CNV_3}.
\begin{theorem}
    Fix $n,m\in\R$. Let $1\leq p <n$ and $\lambda \geq 1 - \frac{1}{n}$. Suppose $Q\in\conbodo[m]$ is origin-symmetric. Then,
    \begin{equation}
    \label{eq:CNV_4}
    \Psi_{Q,p,\lambda}(\mathcal{Y})  N_\lambda (\mathcal{Y})^{p((\lambda-1)n+1)} \geq \Vol(\PP \B )^{-\frac{p}{nm}} (n\omega_n)^{-1} \left(\frac{c(n,r,p)}{\lambda -1 + \frac{1}{p}}\right)^p,
\end{equation}
where $c(n,r,p)$ is the sharp constant from \eqref{eq:CNV_2}. There is equality if and only if $\mathcal{Y}$ is a generalized Gaussian.
\end{theorem}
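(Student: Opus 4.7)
The plan is to specialize the $(L^p,Q)$-enhanced Gagliardo--Nirenberg inequality \eqref{eq:CNV_3} to the function $w=g_\mathcal{Y}^{\lambda-1+1/p}$, and then read off the result for $\mathcal{Y}$ using the translation identities already assembled in the paragraph preceding the theorem. These are: the formula $\Psi_{Q,p,\lambda}(\mathcal{Y})=(\lambda-1+1/p)^{-p}\vol[nm](\PP\langle w\rangle_p)^{-p/(nm)}$; the choice $r=1/(\lambda-1+1/p)$, with $q$ and $s$ determined by \eqref{eq:q_param} and \eqref{eq:s_param}; and the two norm identities $\|w\|_{L^q(\R^n)}^{1-s}=N_\lambda(\mathcal{Y})^{-((\lambda-1)n+1)}$ and $\|w\|_{L^r(\R^n)}^{s}=1$. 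The hypothesis $\lambda\geq 1-1/n$ is exactly what places $r$ in the admissible range $(0,np/(n-p)]$ of \eqref{eq:CNV}, so \eqref{eq:CNV_2} and hence \eqref{eq:CNV_3} applies.

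I would first verify that $w$ lies in $W^{1,p}(\R^n)$, so that the $LYZ$ body $\langle w\rangle_p$ and its $(L^p,Q)$ polar projection $\PP\langle w\rangle_p$ are well defined. A short computation using $\nabla w=(\lambda-1+1/p)\,g_\mathcal{Y}^{\lambda-2+1/p}\nabla g_\mathcal{Y}$ gives $\|\nabla w\|_{L^p}^p=(\lambda-1+1/p)^p\Phi_{p,\lambda}(\mathcal{Y})$, which is controlled by the (finite) affine Fisher quantity $\Psi_{Q,p,\lambda}(\mathcal{Y})$; if $\nabla w\equiv 0$ there is nothing to prove. With this in place, I apply \eqref{eq:CNV_3} to $w$ and raise both sides to the $p$-th power to obtain
$$\vol[nm](\PP\langle w\rangle_p)^{-p/(nm)}\ \geq\ \Vol(\PP\B)^{-p/(nm)}\,(n\omega_n)^{-1}\,c(n,r,p)^{p}\,\|w\|_{L^q}^{p(1-s)}\|w\|_{L^r}^{ps}.$$
Multiplying through by $(\lambda-1+1/p)^{-p}$ collapses the left-hand side to $\Psi_{Q,p,\lambda}(\mathcal{Y})$, and inserting the two norm identities on the right reduces the $w$-dependence to a single factor $N_\lambda(\mathcal{Y})^{-p((\lambda-1)n+1)}$. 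Transferring that factor to the left produces exactly the claimed inequality \eqref{eq:CNV_4}.

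For the equality characterization I would retrace each step. The three translation identities are equalities by construction, so equality in \eqref{eq:CNV_4} is equivalent to equality in \eqref{eq:CNV_3} applied to $w$. The latter forces $\langle w\rangle_p$ to be a linear image of $\B$ (Theorem~D equality) together with $w$ of the form \eqref{CNV_equality} whose $K$ is the volume-normalization of $\langle w\rangle_p$, so $w=w_{a,b,A\B}$ for some $A\in GL_n(\R)$, $a,b>0$. Inverting $g_\mathcal{Y}=w^{r}$ and noting that $p_{p/(p-1),\,2-r/p}(\,\cdot\,)^{r}$ is, up to affine rescaling, precisely $p_{p,\lambda}$, one recovers $g_\mathcal{Y}=f_{A\mathfrak{Z}}$ where $\mathfrak{Z}$ is a standard generalized Gaussian associated with $(p,\lambda)$; a direct substitution confirms that every such density saturates \eqref{eq:CNV_4}.

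The only non-routine step is this last bookkeeping: verifying that the exponent map $(p',\lambda')=(p/(p-1),\,2-r/p)$ induced by raising $g_\mathcal{Y}$ to the power $\lambda-1+1/p$ carries the Gagliardo--Nirenberg extremal family $\{w_{a,b,A\B}\}$ bijectively onto the family of generalized Gaussians $\{f_{A\mathfrak{Z}}\}$ associated with $(p,\lambda)$. The algebraic core of the inequality itself is a single substitution into a sharp affine Sobolev-type inequality that is already in hand.
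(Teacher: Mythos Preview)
Your proposal is correct and follows exactly the route the paper takes: substitute $w=g_{\mathcal{Y}}^{\lambda-1+1/p}$ into \eqref{eq:CNV_3}, use the identity $\Psi_{Q,p,\lambda}(\mathcal{Y})=(\lambda-1+1/p)^{-p}\vol[nm](\PP\langle w\rangle_p)^{-p/(nm)}$ and the two norm identities $\|w\|_{L^q}^{1-s}=N_\lambda(\mathcal{Y})^{-((\lambda-1)n+1)}$, $\|w\|_{L^r}^{s}=1$, and read off the result. The paper's own proof is the terse one-line remark ``The following theorem then follows from \eqref{eq:CNV_3}'' after recording these identities; your write-up simply expands that remark with the regularity check and a more explicit treatment of the equality case.
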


\bibliographystyle{acm}
\bibliography{references}
\end{document}